\newcommand{\ZZ}{\mathbb{Z}}
\newcommand{\CC}{\mathbb{C}}
\newcommand{\SYT}{{\rm {SYT}}}
\newcommand{\la}{\lambda}
\newcommand{\BBB}{{\mathcal{B}}}
\newcommand{\CCC}{{\mathcal{C}}}
\newcommand{\Des}{\operatorname{Des}}
\newcommand{\sign}{\operatorname{sign}}
\newcommand{\fin}{\operatorname{maxout}}
\newcommand{\ord}{\operatorname{ord}}
\newcommand{\Comp}{\operatorname{Comp}}
\newcommand{\then}{\Longrightarrow}
\newcommand{\maj}{\operatorname{maj}}
\newcommand{\x}{{\mathbf x}}
\newtheorem{theorem}{Theorem}[section]
\newtheorem{corollary}[theorem]{Corollary}
\newtheorem{proposition}[theorem]{Proposition}
\newtheorem{lemma}[theorem]{Lemma}
\newtheoremstyle{defn}{1.2ex}{1.2ex}{}{}{}{.}{.5em}%
{\textbf{\thmname{#1}\thmnumber{ #2}}\thmnote{\emph{ (#3)}}}
\theoremstyle{defn}
\newtheorem{defn}[theorem]{Definition}
\newtheorem{example}[theorem]{Example}
\newtheorem{remark}[theorem]{Remark}
\newtheorem{question}[theorem]{Question}
\newtheorem{fact}[theorem]{Fact}
\newtheorem{observation}[theorem]{Observation}
\newtheorem{claim}[theorem]{Claim}
\newcommand{\todo}[1]{\vspace{2 mm}\par\noindent
   \marginpar{\textsc{ToDo}}\framebox{\begin{minipage}[c]{0.95 \textwidth}
   \tt #1 \end{minipage}}\vspace{2 mm}\par}
\numberwithin{figure}{section}
\numberwithin{table}{section}
\begin{document}
\title{Matrices, Characters and Descents}

\author{Ron M.\ Adin}
\address{Department of Mathematics\\
Bar-Ilan University\\
52900 Ramat-Gan\\
Israel} \email{radin@math.biu.ac.il}
\thanks{Both authors were partially supported by Internal Research Grants
from the Office of the Rector, Bar-Ilan University}

\author{Yuval Roichman}
\address{Department of Mathematics\\
Bar-Ilan University\\
52900 Ramat-Gan\\
Israel} \email{yuvalr@math.biu.ac.il}

\begin{abstract}
A new family of asymmetric matrices of Walsh-Hadamard type is introduced.
We study their properties and, in particular, compute their determinants and
discuss their eigenvalues.
The invertibility of these matrices implies that certain character formulas are invertible,
yielding expressions for the cardinalities of sets of combinatorial objects with prescribed
descent sets in terms of character values of the symmetric group.
\end{abstract}

\keywords{Walsh-Hadamard matrices, symmetric group, character formulas, descents}


\date{submitted: Sep.\ 11, '13; revised: Nov.\ 18, '14}

\maketitle


\section{Introduction}\label{section:intro}


\medskip
\subsection{Overview}\ 
\medskip

Many character formulas involve the descent set of
a permutation or of a standard Young tableau.
We propose here a general setting for such formulas, involving
a new family of asymmetric matrices of Walsh-Hadamard type.
These matrices turn out to have fascinating properties,
some of which are studied here using
a transformation based on M\"obius inversion.
These include the evaluation of numerical attributes such as
the determinants of the matrices and the entries of related matrices.
An explicit combinatorial description of the eigenvalues,
conjectured in an earlier version of this paper, has recently been proved by G.\ Alon.


The inverse matrices lead to
formulas expressing the
cardinalities of sets of combinatorial objects with prescribed ``descent sets''
(extending the familiar notion of descent set of a permutation)
in terms of character values of the symmetric group.
Examples of such objects include 
permutations of fixed length, involutions, standard Young
tableaux, and more. It also follows that certain statements about
permutation statistics have equivalent formulations in character
theory. For example, the fundamental equi-distribution Theorem of
Foata and Sch\"utzenberger, independently proved by Garsia and
Gessel, is equivalent to a theorem of Lusztig and Stanley in
invariant theory.




\medskip
\subsection{Outline and main results}\ 
\medskip

Beyond the preliminaries in Section~\ref{section:preliminaries}, this paper consists of
a linear algebraic part (Sections~\ref{section:matrices}--\ref{section:matrix_entries})
and a character theoretic part (Sections~\ref{section:char_fine} and
\ref{section:applications}--\ref{section:fine_sets}).
Let us list the main results in each part.

\medskip
\subsubsection{Matrices}\ 
\medskip

Recall the well known {\em Walsh-Hadamard (Sylvester)} matrices,
defined by the recursion
\[
H_n = \left(\begin{array}{cc}
H_{n-1} & H_{n-1} \\
H_{n-1} & -H_{n-1}
\end{array}\right)
\qquad(n \ge 1)
\]
with $H_0 = (1)$.

\begin{defn}\label{d.AB_recursion}
Define, recursively,
\[
A_n = \left(\begin{array}{cc}
A_{n-1} & A_{n-1} \\
A_{n-1} & -B_{n-1}
\end{array}\right)
\qquad(n \ge 1)
\]
with $A_0 = (1)$, and
\[
B_n = \left(\begin{array}{cc}
A_{n-1} & A_{n-1} \\
0 & -B_{n-1}
\end{array}\right)
\qquad(n \ge 1)
\]
with $B_0 = (1)$.
\end{defn}

Each of the matrices $A_n$ and $B_n$ may be obtained from the
corresponding Walsh-Hadamard matrix $H_n$, all the entries of
which are $\pm 1$, by replacing some of the entries by $0$.

\medskip

\begin{theorem}\label{main1} (Theorem~\ref{t.An-determinant}
below)\\
\[
\det(A_n) = (n+1) \cdot \prod_{k=1}^{n} k^{2^{n-1-k} (n+4-k)}
\qquad(n \ge 2)
\]
while $\det(A_0) = 1$ and $\det(A_1) = -2$, and
\[
\det(B_n) = \prod_{k=1}^{n} k^{2^{n-1-k} (n+2-k)} \qquad(n \ge 2)
\]
while $\det(B_0) = 1$ and $\det(B_1) = -1$.
\end{theorem}

It follows that $A_n$ and $B_n$ are invertible for all $n \ge 0$.
The invertibility of $A_n$ will be applied to provide
a generalized setting for the study of character formulas.


In Sections~\ref{section:matrices}--\ref{section:matrix_entries}
we shall study and discuss various properties of these matrices.
These sections may be read as an independent ``linear algebraic'' unit,
not involving the character theoretic motivation and applications
discussed elsewhere in the paper.

\medskip
\subsubsection{Fine sets}\ 
\medskip

Recall the {\em descent set} of a permutation $\pi\in S_n$, defined by
\[
\Des(\pi):=\{i:\ \pi(i)>\pi(i+1)\}.
\]


A sequence $(a_1, \ldots, a_n)$ of distinct positive integers is {\em unimodal}
if there exists $1 \le m\le n$ such that
\[
a_1 > a_2 > \ldots > a_m < a_{m+1} < \ldots < a_n.
\]
Let $\mu=(\mu_1,\dots,\mu_t)$ be a composition of $n$.
A sequence of $n$ positive integers is {\em $\mu$-unimodal} if
the first $\mu_1$ integers form a unimodal sequence,
the next $\mu_2$ integers form a unimodal sequence, and so on.
A permutation $\pi \in S_n$ is {\em $\mu$-unimodal} if
the sequence $(\pi(1), \ldots, \pi(n))$ is $\mu$-unimodal.
Denote by $U_\mu$ the set of all $\mu$-unimodal permutations
in $S_n$.


For a positive integer $n$ denote the set of all compositions of $n$ by $\Comp(n)$.
For a composition $\mu=(\mu_1, \ldots, \mu_t)$ set
$S(\mu) := \{\mu_1, \mu_1+\mu_2, \ldots\}$.

\medskip

Motivated by several examples (to be described in Section~\ref{section:char_fine})
we define the following concept.

\begin{defn}
A subset $\BBB\subseteq S_n$ is a {\em fine set} for a complex
$S_n$-representation $\rho$ if, for each composition $\mu$ of $n$,
the character value of $\rho$ at a conjugacy class of cycle type $\mu$ satisfies
\[
\chi^\rho(\mu) = \sum\limits_{\pi \in \BBB\cap U_\mu}
(-1)^{|\Des(\pi) \setminus S(\mu)|}.
\]
\end{defn}
Examples of fine sets include
\begin{itemize}
\item
permutations of fixed Coxeter length (equivalently, fixed inversion number);
\item
conjugacy classes and their unions
(e.g., involutions or permutations with fixed cycle number);
\item
Knuth classes and their unions
(e.g., inverse descent classes or permutations with fixed descent number).
\end{itemize}
Definitions of these sets and further examples will be given in
the sequel.

\medskip

The invertibility of the matrix $A_n$ will be 
applied to prove the following.

\begin{theorem}\label{main2}(Theorem~\ref{t.main} below)\\
If $\BBB$ is a fine set for an $S_n$-representation $\rho$ then
the character values of $\rho$ uniquely determine the distribution
of descent sets over $\BBB$.
\end{theorem}

An explicit formula for the distribution of the descent set over
$\BBB$ in terms of the character values will be given in
Theorem~\ref{t.main} below.

\bigskip

Theorem~\ref{main2} will then be applied to provide various
equivalent descriptions of fine sets.

\begin{theorem}\label{main3}
For every subset $\BBB\subseteq S_n$, the following are equivalent.
\begin{itemize}
\item[$(i)$]
$\BBB$ is fine; in other words,
the function $\chi^\BBB : \Comp(n) \longrightarrow \ZZ$ defined by
\[
\chi^\BBB(\mu):=\sum\limits_{\pi\in \BBB\cap U_\mu}
(-1)^{|\Des(\pi)\setminus S(\mu)|}
\]
is an $S_n$-character, i.e., it does not depend on the order of parts in $\mu$
and is a linear combination, with nonnegative integer coefficients,
of the irreducible characters of $S_n$.
\item[$(ii)$]
There exists a set partition $\BBB = \BBB_1 \dot{\cup} \ldots \dot{\cup} \,\BBB_m$
such that, for each $1 \le i \le m$, there exists a descent-preserving bijection
from $\BBB_i$ to the set of all standard Young tableaux of
a suitable shape $\la^{(i)} \vdash n$.
\item[$(iii)$]
The quasi-symmetric function $F_\BBB(\x)$, defined in
Subsection~\ref{subsection:quasi}, is symmetric and Schur
positive.
\item[$(iv)$]
There exists a basis $\{C_b : b\in \BBB\}$ for an $S_n$-representation space $V$
such that the linear action of every simple reflection $s_i$ has the form
\[
s_i (C_b) =
\begin{cases}
-C_b, &\text{\rm if } i \in \Des (b); \\
C_b + \sum_{b' \in \BBB \text{ \rm s.t.\ } i \in\Des(b')}
a_i(b,b') C_{b'}, &\text{\rm otherwise,}
\end{cases}
\]
for suitable coefficients $a_i(b,b')$.
\end{itemize}
\end{theorem}


See Propositions~\ref{t.criterion1} and~\ref{t.criterion_q} and
Theorem~\ref{t.condition-fine}.

\medskip

More examples of fine sets, generalizations and applications will
be given in Sections~\ref{section:char_fine} and
\ref{section:applications}--\ref{section:fine_sets} below.

\tableofcontents


%

\section{Preliminaries and notation}\label{section:preliminaries}

\medskip
\subsection{Intervals, runs, compositions and partitions}\label{subsection:prelim_compositions}\ 
\medskip

For nonnegative integers $m, n$ denote the {\em interval}
\[
[m,n]:= \begin{cases}
\{m, m+1, \ldots, n\}, & \text{if $m \le n$;}\\
\emptyset, & \text{otherwise.}
\end{cases}
\]
Denote also $[n] := [1, n] = \{1, \ldots, n\}$ ($[0] = \emptyset$).

\begin{defn}
A {\em prefix} of an interval $[m + 1, m + \ell]$ (with $\ell \ge 0$) is
an interval $[m + 1, m + p]$ for some $0 \le p \le \ell$.
\end{defn}

\begin{defn}\label{def.runs}
Let $I$ be a finite set of integers.
The {\em runs} in $I$ are its maximal subsets which are nonempty intervals.
Thus, if $I_1, \ldots, I_t$ is the sequence of runs in $I$, then
$I$ is their disjoint union and each $I_k$ has the form
$\{ m_k+1, m_k+2, \ldots, m_k+\ell_k \}$ with $\ell_k\ge 1$ $(\forall k)$
and $m_1 < m_1+\ell_1 < m_2 < m_2+\ell_2 < \ldots < m_t < m_t+\ell_t$.
In particular, $|I| = \ell_1 + \ldots + \ell_t$.
\end{defn}

\begin{example}
If $I = \{1,2,4,5,6, 8, 10\}$ then
$I_1 = \{1,2\}$, $I_2 = \{4,5,6\}$, $I_3 = \{8\}$ and $I_4 = \{10\}$.
\end{example}

A {\em composition} of a positive integer $n$ is a vector
$\mu=(\mu_1, \ldots, \mu_t)$ of positive integers such that
$\mu_1+\cdots+\mu_t=n$.
A {\em partition} of $n$ is a composition with
weakly decreasing entries $\mu_1 \ge \ldots \ge \mu_t > 0$.
The {\em underlying partition} of a composition is obtained by
reordering the entries in weakly decreasing order.


For each composition $\mu = (\mu_1, \ldots, \mu_t)$ of $n$, define
the corresponding set of partial sums
\[
S(\mu) := 
\{ \mu_1, \mu_1+\mu_2, \ldots, \mu_1 + \ldots + \mu_t = n\} \subseteq [n],
\]
as well as its complement
\[
I(\mu) := [n] \setminus S(\mu) \subseteq [n-1].
\]
For example, the composition $\mu=(3,4,2,5)$ of $n = 14$ has
$S(\mu) = \{3,7,9,14\}$ and $I(\mu) = \{1,2,4,5,6,8,10,11,12,13\}$.

The correspondence $\mu \longleftrightarrow I(\mu)$ is  a bijection between
the set of all compositions of $n$ and the power set (set of all subsets) of $[n-1]$.
The runs in $I(\mu)$ correspond to the components of $\mu$ which are
strictly larger than $1$. 
Each such component $\mu_i$ corresponds to a run of length $\mu_i - 1$.

\medskip
\subsection{Permutations and standard Young tableaux}\label{subsection:perms_SYT}\ 
\medskip

The concepts introduced in this subsection will be used only in
Sections~\ref{section:char_fine} and \ref{section:applications}--\ref{section:fine_sets}.

Let $S_n$ be the symmetric group on the letters $1, \ldots, n$.
It is generated by the simple reflections (adjacent transpositions)
$s_i := (i,i+1)$ $(1 \le i \le n-1)$.

\begin{defn}\label{d.Des_Sn}
The {\em length} $\ell(\pi)$ of a permutation $\pi \in S_n$ is
the minimal number $\ell \ge 0$ such that $\pi$ can be expressed as
a product of $\ell$ generators: $\pi = s_{i_1} \cdots s_{i_\ell}$.
The {\em descent set} of $\pi$ is
\[
\Des(\pi)
:= \{i : \ \pi(i)>\pi(i+1)\}
= \{i : \ell(\pi s_i) < \ell(\pi)\}.
\]
\end{defn}

For a partition $\la$ of $n$ let $\SYT(\la)$ be the set of all standard Young tableaux
of shape $\la$ (see. e.g, \cite[\S 2.5]{Sagan}).

\begin{defn}\label{d.Des_SYT}
The {\em descent set} of a standard Young tableaux $T$ is the set
\[
\Des(T):=\{1 \le i \le n-1 :  i+1 \text{ appears in a lower row of } T \text{ than } i\}.
\]
\end{defn}


%


The {\em Robinson-Schensted correspondence} is a fundamental bijection
mapping each permutation $\pi \in S_n$ to
a pair $(P_\pi, Q_\pi)$ of standard Young tableaux of the same shape
$\la \vdash n$ (which is also called the shape of $\pi$);
for a detailed description see, e.g., \cite[\S 3.1]{Sagan}\cite[\S 7.11]{Stanley_ECII}.

\begin{fact}\label{t.Des_RS}\cite[Fact A3.4.1]{BB}
For every permutation $\pi\in S_n$,
\[
\Des(P_\pi)=\Des(\pi^{-1}) \quad \text{ and } \quad
\Des(Q_\pi)=\Des(\pi).
\]
\end{fact}

\begin{defn}\label{d.Knuth_class}
The {\em Knuth class} corresponding to a standard Young tableau $T$
of size $n$ is the set of permutations
\[
\CCC_T :=\{ \pi \in S_n :\ P_\pi = T \}.
\]
If $T$ has shape $\la$ then
$\CCC_T$ is a {\em Knuth class of shape $\la$}.
\end{defn}

\begin{defn}
The {\em inverse descent class} corresponding to a subset $J\subseteq [n-1]$
is the set of permutations
\[
\{\pi\in S_n:\ \Des(\pi^{-1})=J\}.
\]
\end{defn}

Fact~\ref{t.Des_RS} clearly implies
\begin{corollary}\label{t.inverse_des_class}
An inverse descent class is a disjoint union of Knuth classes.
\end{corollary}


\begin{defn}
A permutation $\pi \in S_n$ is {$k\cdots 21$-avoiding} if the
sequence of values $(\pi(1), \ldots, \pi(n))$ has no decreasing
subsequence of length $k$, i.e., there do not exist indices $1 \le
i_1 < i_2 < \cdots < i_k \le n$ such that $\pi(i_1) > \pi(i_2) >
\cdots > \pi(i_k)$.
\end{defn}


The following classical theorem is due to Schensted.

\begin{theorem}~\cite{Schensted}
For every permutation $\pi \in S_n$, the length of the longest
decreasing subsequence in $\pi$ is equal to the number of rows in
$P_\pi$.
\end{theorem}

\begin{corollary}\label{t.321_avoiding}
For every positive integer $k$, the set of all $k\cdots
21$-avoiding permutations in $S_n$ is a disjoint union of Knuth
classes.
\end{corollary}

\section{Character formulas and fine sets}\label{section:char_fine}

We start by introducing the notion of $\mu$-unimodality
and collecting a number of similarly-looking character formulas,
in order to motivate the forthcoming concept of {\em fine set}.

\medskip
\subsection{$\mu$-unimodality of sets, permutations and tableaux}\label{subsection:prelim_unimodality}\ 
\medskip

%
%
%

Recall from Subsection~\ref{subsection:prelim_compositions}
the definition of $I(\mu)$ for a composition $\mu$,
and the concept of runs in a set of integers.

\begin{defn}\label{def.unimodal_set}
Let $\mu=(\mu_1, \ldots, \mu_t)$ be a composition of $n$.
A subset $J \subseteq [n-1]$ is  {\em $\mu$-unimodal} if
each run of $J \cap I(\mu)$ is a prefix of the corresponding run of $I(\mu)$;
in other words, if
$J \cap I(\mu)$ is a disjoint union of intervals of the form
$\left[ \sum_{i=1}^{k-1}\mu_i+1, \sum_{i=1}^{k-1}\mu_i + \ell_k \right]$,
where $0 \le \ell_k \le \mu_k - 1$ for every  $1 \le k \le t$.
\end{defn}


\begin{defn}\label{def.unimodal_permutation}
Let $\mu=(\mu_1, \ldots, \mu_t)$ be a composition of $n$.
A permutation $\pi \in S_n$ is called {\em $\mu$-unimodal}
if its descent set (see Definition~\ref{d.Des_Sn})
is $\mu$-unimodal in the sense of Definition~\ref{def.unimodal_set}.
Denote by $U_\mu$ the set of all  $\mu$-unimodal permutations in $S_n$.
Thus $\pi \in U_\mu$ if and only if,
for any $1 \le k \le t$, the sequence of values $(\pi(m_k + 1), \ldots, \pi(m_k + \mu_k))$
is unimodal, namely there exists $0 \le \ell_k \le \mu_k - 1$ such that
\[
\pi(m_k + 1) > \ldots > \pi(m_k + \ell_k) > \pi(m_k + \ell_k + 1) < \ldots < \pi(m_k + \mu_k),
\]
where $m_k = \mu_1 + \ldots + \mu_{k-1}$.
This definiton differs slightly from the commonly used notion of unimodality
of sequences,
where all inequalities are reversed; and it includes the extreme cases
$\ell_k = 0$ (an increasing sequence) and $\ell_k = \mu_k - 1$ (a decreasing sequence).
\end{defn}

\begin{example}
The permutation $\pi=936871254$ has $\Des(\pi) = \{1,4,5,8\}$,
and is therefore $(4,3,2)$-unimodal but not $(5,4)$-unimodal.
\end{example}

\begin{defn}\label{def.unimodal_SYT}
Let $\mu=(\mu_1, \ldots, \mu_t)$ be a composition of $n$.
A standard Young tableau $T$ of size $n$ is called {\em $\mu$-unimodal}
if its descent set (see Definition~\ref{d.Des_SYT})
is $\mu$-unimodal in the sense of Definition~\ref{def.unimodal_set}.
For a partition $\la$ of $n$ denote by $U_\mu(\la)$
the set of all  $\mu$-unimodal standard Young tableaux of shape $\la$.
\end{defn}

\begin{example}
The standard Young tableau
\[
T = \,
\young(134,259,68,7)
\]
has $\Des(T) = \{1,4,5,6\}$, and is therefore $(4,5)$-unimodal but not $(5,4)$-unimodal.
\end{example}

\medskip
\subsection{A family of character formulas}\label{subsection:character formulas}\ 
\medskip


Let $\lambda$ be a partition of $n$ and $\mu$ a composition of $n$.
Let $\chi^\lambda$ be the
irreducible $S_n$-character corresponding to $\la$ and,
for any $S_n$-character $\chi$, let $\chi_\mu$ be its value on a conjugacy
class of cycle type $\mu$.
The following formula for the values of irreducible characters is a special case
of~\cite[Theorem 4]{Ro2}; for a direct combinatorial proof see~\cite{Ra2}.
Note that a typo in the formulation of~\cite[Theorem 4]{Ro2} was corrected in~\cite{Ra2}.
Note also that the original statements of all the formulas quoted in this subsection
formally assumed that $\mu$ is a partition, but their proofs are actually valid
whenever $\mu$ is an arbitrary composition.

\begin{theorem}\label{t.c1}%
\cite[Theorem 4]{Ro2}\cite[Equations (0.2)--(0.4)]{Ra2}
\[
\chi^\lambda_\mu = \sum_{T\in U_\mu(\lambda)} (-1)^{|\Des(T) \cap I(\mu)|},
\]
where $U_\mu(\lambda)$ is the set of all $\mu$-unimodal standard
Young tableaux of shape $\lambda$.
\end{theorem}

\medskip

Let $\chi^{(k)}$ be the $S_n$-character defined by the symmetric group action on
the $k$-th homogeneous component of the coinvariant algebra. Then

\begin{theorem}\label{t.c2}\cite[Theorem 1]{Ro-Schubert}
\[
\chi^{(k)}_\mu =
\sum_{\pi\in L(k)\cap U_\mu} (-1)^{|\Des(\pi) \cap I(\mu)|},
\]
where $L(k)$ is the set of all permutations of length $k$ in $S_n$.
\end{theorem}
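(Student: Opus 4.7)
The plan is to deduce Theorem~\ref{t.c2} from Theorem~\ref{t.c1} via the Lusztig-Stanley decomposition of the coinvariant algebra. By Lusztig-Stanley (equivalently, Kra\'skiewicz-Weyman), the multiplicity of the irreducible character $\chi^\la$ in $\chi^{(k)}$ is $m_{\la,k} := |\{T \in \SYT(\la) : \maj(T) = k\}|$, so
\[
\chi^{(k)}_\mu \;=\; \sum_\la m_{\la, k}\, \chi^\la_\mu.
\]
Grouping the target right-hand side of Theorem~\ref{t.c2} by the RSK-shape of $P(\pi)$, the claim reduces, shape by shape, to the identity
\[
m_{\la, k}\, \chi^\la_\mu
\;=\;
\sum_{\substack{\pi \in L(k) \cap U_\mu \\ P(\pi)\ \text{has shape}\ \la}} (-1)^{|\Des(\pi) \cap I(\mu)|},
\]
for each partition $\la$ of $n$.

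To approach this shape-wise identity, I would combine two classical ingredients. First, summing Theorem~\ref{t.c1} over all $f^\la := |\SYT(\la)|$ Knuth classes of shape $\la$ yields
\[
f^\la\, \chi^\la_\mu \;=\; \sum_{\substack{\pi \in U_\mu \\ P(\pi)\ \text{has shape}\ \la}} (-1)^{|\Des(\pi) \cap I(\mu)|}.
\]
Second, the Foata-Sch\"utzenberger / Garsia-Gessel equi-distribution theorem, refined by insertion tableau, asserts that each Knuth class of shape $\la$ contains exactly $m_{\la, k}$ permutations of length $k$; equivalently, for each fixed $P_0 \in \SYT(\la)$ there is a bijection between $\{\pi : P(\pi) = P_0,\ \inv(\pi) = k\}$ and $\{Q \in \SYT(\la) : \maj(Q) = k\}$. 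Because within a Knuth class the recording tableau $Q(\pi)$ ranges over $\SYT(\la)$ and $\Des(\pi) = \Des(Q(\pi))$, this bijection preserves the descent set -- and hence the sign $(-1)^{|\Des(\pi) \cap I(\mu)|}$. Assembling the length-$k$ slices of all $f^\la$ Knuth classes of shape $\la$ would then identify the right-hand side of the shape-wise identity with $m_{\la, k}\, \chi^\la_\mu$, provided the bijection can be chosen so as to respect membership in $U_\mu$.

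The principal obstacle is precisely this last proviso. Membership in $U_\mu$ is a property of the full one-line word of $\pi$ and is not visible in $(P(\pi), Q(\pi))$ or in $\Des(\pi)$, so the standard Foata-Sch\"utzenberger bijection between $\inv$ and $\maj$ has no reason to respect unimodality. The technical heart of the proof is to reconcile the $\inv$-$\maj$ correspondence with the block-wise unimodality condition -- either by refining the bijection so that each of the $t$ consecutive-letter blocks determined by $\mu$ is handled locally, using cyclic rearrangements within unimodal runs, or by constructing a sign-reversing involution on the non-unimodal shape-$\la$, length-$k$ permutations that cancels their contributions to the signed sum. Either route is where the real combinatorial work lies.
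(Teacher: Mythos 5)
The paper itself gives no proof of Theorem~\ref{t.c2}: it is imported verbatim from \cite[Theorem 5.1]{APR1}, where it is established directly by a Hecke-algebra computation that has nothing to do with either the Lusztig--Stanley decomposition or the Foata--Sch\"utzenberger bijection. So there is no ``paper proof'' to match against; I will assess the proposal on its own terms.

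The pivotal claim in your second ingredient is false. You assert that, for each fixed $P_0\in\SYT(\la)$, the Knuth class $\{\pi : P(\pi)=P_0\}$ contains exactly $m_{\la,k}$ permutations of Coxeter length $k$, with matching descent sets. Take $n=4$, $\la=(2,2)$ and $P_0=\begin{smallmatrix}1&2\\3&4\end{smallmatrix}$. That Knuth class is $\{3412,\ 3142\}$, with lengths $\ell(3412)=4$ and $\ell(3142)=3$. Meanwhile the two standard tableaux of shape $(2,2)$ have $\maj$ equal to $2$ and $4$. So the $\inv$-distribution over the Knuth class is $\{3,4\}$, not $\{2,4\}$; in particular $m_{(2,2),3}=0$ yet the class contains a permutation of length $3$. (The other Knuth class of this shape, $\{2413,2143\}$, has lengths $\{2,3\}$, so the discrepancy does not even cancel in aggregate across Knuth classes of a fixed shape.) The source of the error is that the Foata--Sch\"utzenberger/Garsia--Gessel equidistribution preserves $\Des$ but not the RSK shape, so it does not restrict to a bijection on a single Knuth class. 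Because of this, the whole shape-by-shape reduction of the target identity collapses: $L(k)$ is simply not a disjoint union of Knuth classes, and the signed sum over $L(k)\cap U_\mu$ cannot be split by the shape of $P(\pi)$ into pieces equal to $m_{\la,k}\,\chi^\la_\mu$.

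There is a workable version of the strategy you are reaching for, and it avoids slicing $L(k)$ by shape. Define $B_k:=\{\pi\in S_n : \maj(\pi^{-1})=k\}$. Since $\maj(\pi^{-1})=\maj(P(\pi))$, the set $B_k$ \emph{is} a disjoint union of Knuth classes, with exactly $m_{\la,k}$ classes of each shape $\la$. Summing Theorem~\ref{t.c1} over these Knuth classes shows that $B_k$ is a fine set (in the sense of Definition~\ref{defn-fine}) for $\rho_k:=\bigoplus_\la m_{\la,k}S^\la$. By Lusztig--Stanley (Theorem~\ref{St-thm}), $\rho_k\cong R_k$, so $\chi^{(k)}=\chi^{\rho_k}$. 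Finally, the Foata--Sch\"utzenberger theorem (Theorem~\ref{FS-thm}) says precisely that $L(k)$ and $B_k$ have the same descent-set distribution; since the right-hand side of \eqref{defn-fine1} depends only on that distribution, $L(k)$ is also a fine set for $R_k$, which is Theorem~\ref{t.c2}. This fixes your argument and also resolves the $U_\mu$-compatibility worry you raise at the end, since the transfer from $B_k$ to $L(k)$ happens at the level of the full descent-set statistic and never needs a bijection that respects $\mu$-unimodality. One caveat worth keeping in mind: the paper's own punchline (Section~\ref{sec:versus}) is that Theorems~\ref{t.c1}, \ref{t.c2} and the invertibility of $A_{n-1}$ together show that Lusztig--Stanley and Foata--Sch\"utzenberger are \emph{equivalent}; deriving Theorem~\ref{t.c2} from both of them is logically sound but runs backwards through the paper's intended chain of implications, so it would be awkward to use in place of the independent proof in \cite{APR1}.
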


\medskip

\begin{corollary}\label{t.c_regular}
Let $\chi^{S_n}$ be the character of the regular representation of
$S_n$. Then
\[
\chi^{S_n}_\mu = \sum_{\pi\in U_\mu} (-1)^{|\Des(\pi) \cap I(\mu)|},
\]
where the sum runs over all $\mu$-unimodal permutations in $S_n$.
\end{corollary}

\begin{proof}
By Theorem~\ref{t.c2}, the RHS is the $S_n$-character of the coinvariant
algebra of $S_n$, which is isomorphic as an $S_n$-module to the
group algebra $\CC[S_n]$~\cite{Chevalley}. 
\end{proof}

\medskip

A complex representation of a group or an algebra $A$ is called a
{\it Gelfand model} for $A$ if it is equivalent to the
multiplicity-free direct sum of all the irreducible $A$-representations.
Let $\chi^G$ be the character of the Gelfand model of $S_n$ (or of its group algebra).

\begin{theorem}\label{t.c3}\cite[Proposition 1.5]{APR2}
\[
\chi^G_\mu = \sum_{\pi\in I_n\cap U_\mu} (-1)^{|\Des(\pi) \cap I(\mu)|},
\]
where $I_n:=\{\pi \in S_n : \pi^2 = id\}$ is the set of all involutions in $S_n$.
\end{theorem}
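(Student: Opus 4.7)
The plan is to combine Theorem~\ref{t.c1} with the Robinson--Schensted--Knuth (RSK) correspondence restricted to involutions, and to reconcile the two resulting signed sums by passing through fundamental quasi-symmetric functions, which is the framework anticipated in Section~\ref{section:rep_theory_fine}.

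Since a Gelfand model of $S_n$ is equivalent to $\bigoplus_{\lambda \vdash n} S^\lambda$, one has $\chi^G = \sum_{\lambda \vdash n} \chi^\lambda$. Applying Theorem~\ref{t.c1} to each summand gives
\[
\chi^G_\mu \;=\; \sum_{\lambda \vdash n} \;\sum_{\pi \in \CCC_\lambda \cap U_\mu} (-1)^{|\Des(\pi) \cap I(\mu)|},
\]
for any choice of Knuth class $\CCC_\lambda$ of RSK-shape $\lambda$. To rewrite this as a sum over $I_n \cap U_\mu$, I would partition $I_n = \bigsqcup_\lambda I_n^\lambda$ by RSK-shape. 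The RSK correspondence sends each involution $\sigma$ to a pair $(P,P)$ with $P$ a standard Young tableau, so $|I_n^\lambda|$ equals the number of such tableaux, which matches $|\CCC_\lambda|$. By Sch\"utzenberger, $\Des(\sigma) = \Des(P(\sigma))$ for every involution, while within a Knuth class, $\Des(\pi) = \Des(P(\pi))$. Hence the bijections $\sigma \mapsto P(\sigma)$ on $I_n^\lambda$ and $\pi \mapsto P(\pi)$ on $\CCC_\lambda$ produce the same multiset of descent sets.

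The crucial step is to upgrade this descent-set matching to the identity
\[
\sum_{\pi \in \CCC_\lambda \cap U_\mu} (-1)^{|\Des(\pi) \cap I(\mu)|}
\;=\;
\sum_{\sigma \in I_n^\lambda \cap U_\mu} (-1)^{|\Des(\sigma) \cap I(\mu)|},
\]
even though the $\mu$-unimodality condition is not determined by the descent set alone. My approach would be to establish the two quasi-symmetric identities
\[
\sum_{\pi \in \CCC_\lambda} F_{\Des(\pi),n} \;=\; s_\lambda \;=\; \sum_{\sigma \in I_n^\lambda} F_{\Des(\sigma),n},
\]
the first by the standard $P$-partition interpretation of Schur functions and the second as a classical consequence of Sch\"utzenberger's descent theorem for involutions, and then invoke a Gessel-type formula that translates any such fundamental quasi-symmetric expansion into a signed $\mu$-unimodal sum. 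Summing the resulting equality over $\lambda$, and using Theorem~\ref{t.c1} on the Knuth-class side, then yields Theorem~\ref{t.c3}.

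The principal obstacle is this last step. Since $\mu$-unimodality does not factor through the descent set, no purely descent-preserving bijection between $\CCC_\lambda$ and $I_n^\lambda$ can directly produce the desired equality; one instead has to argue at the level of the generating function in quasi-symmetric variables and its evaluation at the power-sum $p_\mu$. The ``fine set'' mechanism of Section~\ref{section:rep_theory_fine} is precisely what bridges the gap between the set-theoretic descent matching and the signed combinatorial identity asserted in the theorem.
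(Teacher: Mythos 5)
The paper does not prove Theorem~\ref{t.c3}; it cites it verbatim from~\cite[Theorem~1.2.3]{APR2}, so there is no ``paper's own proof'' here to compare against. That said, your plan to deduce it from Theorem~\ref{t.c1} via $\chi^G=\sum_{\lambda\vdash n}\chi^\lambda$ and the RSK/Sch\"utzenberger machinery is the natural route, and it does work --- but you have manufactured an obstacle that does not exist, and consequently reached for heavier machinery than is needed.

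The claimed difficulty is that ``$\mu$-unimodality is not determined by the descent set alone.'' This is false, and the paper says so explicitly: by Definition~\ref{def.unimodal_set} and the Observation immediately following it, a permutation $\pi$ is $\mu$-unimodal if and only if $\Des(\pi)$ is a $\mu$-unimodal subset of $[n-1]$ (each run of $\Des(\pi)\cap I(\mu)$ is a prefix of the corresponding run of $I(\mu)$). Once you know that membership in $U_\mu$ depends only on $\Des$, the argument closes immediately: within a Knuth class $\CCC_\lambda$ the recording tableau $Q(\pi)$ ranges bijectively over $\SYT(\lambda)$ with $\Des(\pi)=\Des(Q(\pi))$ (note it is $Q$, not $P$, that varies and carries the descent set --- $P$ is constant on a Knuth class), and Sch\"utzenberger's theorem gives $\Des(\sigma)=\Des(P(\sigma))$ with $P(\sigma)$ ranging bijectively over $\SYT(\lambda)$ as $\sigma$ ranges over $I_n^\lambda$. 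Both the indicator of $U_\mu$ and the sign $(-1)^{|\Des\cap I(\mu)|}$ factor through $\Des$, so the inner sums over $\CCC_\lambda\cap U_\mu$ and $I_n^\lambda\cap U_\mu$ agree term-by-term via $\SYT(\lambda)$; summing over $\lambda$ finishes the proof. The detour through fundamental quasi-symmetric functions and a ``Gessel-type formula'' is therefore unnecessary --- it is a correct alternative (and is in the spirit of Proposition~\ref{criterion1}), but the elementary bijective argument you dismissed is exactly what works.
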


Note that there is a misprint in the formulation of~\cite[Proposition 1.5]{APR2},
which is corrected here.



\medskip

In this paper we propose a general setting for all of these results.
In particular, we provide an answer to the following question.

\begin{question}\label{q.invertible}
Are these character formulas invertible?
\end{question}

\medskip
\subsection{Fine sets}\label{subsection:concept}\ 
\medskip

\begin{defn}\label{d.fine}
Let $\BBB$ be a set of combinatorial objects, and let $\Des: \BBB \to P_{n-1}$ be
a map which associates with each element $b\in \BBB$ a subset $\Des(b) \subseteq [n-1]$.
Denote by $\BBB_\mu$ the set of elements in $\BBB$ whose ``descent set'' $\Des(b)$ is
$\mu$-unimodal, as in Definition~\ref{def.unimodal_set}.
Let $\rho$ be a complex $S_n$-representation, with a corresponding character $\chi^\rho$.
Then $\BBB$ is called a {\em fine set} for $\rho$ (with respect to the map $\Des$)
if, for each composition $\mu$ of $n$, the value of the character $\chi^\rho$
at a conjugacy class of cycle type $\mu$ satisfies
\begin{equation}\label{defn-fine1}
\chi^\rho_\mu = \sum_{b \in \BBB_\mu} (-1)^{|\Des(b) \cap I(\mu)|} .
\end{equation}
\end{defn}

If the map $\Des$ is understood from the context (as in the case $\BBB\subseteq  S_n$),
we simply say that $\BBB$ is a fine set for $\rho$.
Furthermore, $\BBB$ is called a fine set if it is a fine set for some representation of $S_n$.

\medskip

The following proposition restates Theorems~\ref{t.c1}, \ref{t.c2} and~\ref{t.c3}
in terms of fine sets (with respect to the ordinary descent maps on
permutations and on standard Young tableaux, as in
Definitions~\ref{d.Des_Sn} and~\ref{d.Des_SYT}).

\begin{proposition}\label{t.fine-examples}\
\begin{itemize}
\item[$(i)$]
For any partition $\la$ of $n$,
the set of standard Young tableaux of shape 
$\la$ is a fine set for the 
irreducible module $S^\la$.
\item[$(ii)$]
The set of permutations of fixed length $k$ in $S_n$ is a fine set for
the $k$-th homogeneous component of the coinvariant algebra of $S_n$.
\item[$(iii)$]
The set of involutions in $S_n$ is a fine set for the Gelfand model of $S_n$.
\end{itemize}
\end{proposition}

More examples of fine sets will be given in Section~\ref{section:fine_sets}.

\section{Two families of matrices}\label{section:matrices}

We now introduce a linear algebraic construction which will help us
answer Question~\ref{q.invertible}.

It is well known that {\em partitions} of $n$ are the natural indices for the
characters and conjugacy classes of $S_n$.
It turns out that a major step towards an answer to Question~\ref{q.invertible} is
to use, instead, {\em compositions} of $n$ -- or, equivalently, subsets of $[n-1]$ --
in spite of the apparent redundancy.
A surprising structure arises, in the form of a certain matrix $A_{n-1}$.
In fact, it is convenient to 
define two ``coupled'' families of matrices, $(A_n)$ and $(B_n)$;
for each nonnegative integer $n$, $A_n$ and $B_n$ are square
matrices of order $2^n$,  with entries $0$, $\pm 1$,
which may be viewed as asymmetric variants of Walsh-Hadamard matrices.
These matrices and some of their properties will be presented in
the following sections.

We shall give two equivalent definitions for these families of matrices.
The explicit definition is closer in spirit to the subsequent applications,
but the recursive definition is very simple to describe and easy to use,
and will therefore be presented first.

\medskip
\subsection{A recursive definition}\ 
\medskip

Recall the well known {\em Walsh-Hadamard (Sylvester)} matrices, defined by the recursion
\[
H_n = \left(\begin{array}{cc}
H_{n-1} & H_{n-1} \\
H_{n-1} & -H_{n-1}
\end{array}\right)
\qquad(n \ge 1)
\]
with $H_0 = (1)$.

\begin{defn}\label{d.AB_recursion}
Define, recursively,
\[
A_n = \left(\begin{array}{cc}
A_{n-1} & A_{n-1} \\
A_{n-1} & -B_{n-1}
\end{array}\right)
\qquad(n \ge 1)
\]
with $A_0 = (1)$, and
\[
B_n = \left(\begin{array}{cc}
A_{n-1} & A_{n-1} \\
0 & -B_{n-1}
\end{array}\right)
\qquad(n \ge 1)
\]
with $B_0 = (1)$.
\end{defn}

Each of the matrices $A_n$ and $B_n$ may be obtained from the corresponding
Walsh-Hadamard matrix $H_n$, all the entries of which are $\pm 1$,
by replacing some of the entries by $0$.

\begin{example}
\[
A_1 = \left(\begin{array}{cc}
1 & 1 \\
1 & -1
\end{array}\right)
\qquad B_1 = \left(\begin{array}{cc}
1 & 1 \\
0 & -1
\end{array}\right)
\]

\[
A_2 = \left(\begin{array}{cccc}
1 & 1  & 1 & 1 \\
1 & -1  & 1 & -1\\
1 & 1 & -1 & -1 \\
1 & -1 & 0 & 1
\end{array}\right)
\qquad B_2 = \left(\begin{array}{cccc}
1 & 1  & 1 & 1 \\
1 & -1  & 1 & -1 \\
0 & 0 & -1 & -1 \\
0 & 0 & 0 & 1
\end{array}\right)
\]
\end{example}


\medskip
\subsection{An explicit definition}\ 
\medskip


It will be convenient to index the rows and columns of $A_n$ and
$B_n$ by subsets of the set $[n] = \{1,\ldots,n\}$.

\begin{defn}\label{d.ord}
For any finite set $J$ of positive integers, let $\ord(J) :=\sum_{j \in J} 2^{j-1}$.
\end{defn}

\begin{defn}\label{def.Pn}
For any nonnegative integer $n$ let $P_n$ be the power set (set of all subsets) of $[n]$.
The parameter $\ord$ maps $P_n$ bijectively to $\{0, 1, \ldots, 2^n - 1\}$,
thus defining a linear order on $P_n$ known as the {\em anti-lexicographic} order:
For $I, J \in P_n$, $I \ne J$, if $m$ is the largest element in
the symmetric difference $I \triangle J := (I \cup J) \setminus (I \cap J)$,
then $I < J \iff m \in J$.
\end{defn}

\begin{example}
The linear order on $P_3$ is
\[
\emptyset < \{1\} < \{2\} < \{1, 2\} < \{3\} < \{1, 3\} < \{2, 3\} < \{1, 2, 3\}.
\]
\end{example}

Order $P_n$ as in Definition~\ref{def.Pn}.
The entries of the Walsh-Hadamard matrix $H_n = (h_{I,J})_{I, J \in P_n}$
are explicitly given by the formula
\[
h_{I,J} := (-1)^{|I \cap J|} \qquad(\forall I, J \in P_n).
\]

Recall, from Subsection~\ref{subsection:prelim_compositions}, the definition of a prefix of an interval.

\begin{lemma}\label{t.AB_explicit}{\rm (Explicit definition)}
Order $P_n$ as in Definition~\ref{def.Pn},
and let $I_1, \ldots, I_t$ be the runs of $I\in P_n$
(see Definition~\ref{def.runs}).
Then: 
\begin{itemize}
\item[$(i)$]
$A_n = (a_{I,J})_{I, J \in P_n}$, where
\[
a_{I,J} =
\begin{cases}
(-1)^{|I \cap J|}, & \text{if $I_k \cap J$ is a prefix of $I_k$ for each $k$;}\\
0, & \text{otherwise.}
\end{cases}
\]
\item[$(ii)$]
$B_n = (b_{I,J})_{I, J \in P_n}$, where:
\[
b_{I,J} =
\begin{cases}
(-1)^{|I \cap J|}, & \text{if $I_k \cap J$ is a prefix of $I_k$ for each $k$, and}\\
& n \not\in I \setminus J;\\
0, & \text{otherwise.}
\end{cases}
\]
\end{itemize}

\end{lemma}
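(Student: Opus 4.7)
The plan is to prove both parts $(i)$ and $(ii)$ simultaneously by induction on $n$, matching the explicit formulas to the $2 \times 2$ block decomposition in Definition~\ref{d.AB_recursion}. The base case $n=0$ is immediate, since $P_0 = \{\emptyset\}$, there are no runs, and the prefix condition and the condition $n \notin I \setminus J$ are vacuous, giving the single entry $(-1)^0 = 1$.

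The first structural observation is that the anti-lexicographic order on $P_n$ lists, in order, all subsets of $[n-1]$ (those with $n \notin I$), followed by all sets of the form $I' \cup \{n\}$ with $I' \subseteq [n-1]$ (in the same inherited order). Hence $A_n$ and $B_n$ decompose into four $2^{n-1} \times 2^{n-1}$ blocks indexed by whether $n \in I$ and whether $n \in J$. I would verify the four blocks in turn, in each case computing $|I \cap J|$, identifying the runs of $I$, and checking the two side conditions of the explicit formula.

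For the two ``top'' blocks ($n \notin I$), the runs of $I$ in $[n]$ coincide with the runs of $I$ viewed in $[n-1]$; since $n \notin I$, the auxiliary condition $n \notin I \setminus J$ holds automatically, and $|I \cap J|$ does not depend on whether $n \in J$. So both top blocks, in both matrices, equal $A_{n-1}$, matching the recursion. For the bottom-left block of $B_n$ ($n \in I$, $n \notin J$), we have $n \in I \setminus J$, so the auxiliary condition fails and $b_{I,J}=0$, matching the zero block. The remaining two blocks — bottom-left of $A_n$ and bottom-right of both $A_n$ and $B_n$ — are the heart of the argument, and require understanding how removing $n$ from $I$ modifies its last run $I_t$: either $I_t = \{n\}$ disappears entirely, or $I_t$ is shortened to $I_t \setminus \{n\}$.

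Here is where care is needed, and I expect the main obstacle to be a clean statement of how the prefix condition transfers. For the bottom-left block of $A_n$ (so $n \in I$, $n \notin J$), since $n \notin J$ the intersection $I_t \cap J$ cannot equal $I_t$; but every proper prefix of $I_t = \{m_t+1, \ldots, n\}$ avoids $n$ and is exactly a prefix of $I_t \setminus \{n\}$. This gives a bijection between valid $(I,J)$ pairs in $A_n$ and valid pairs $(I',J)$ in $A_{n-1}$, with $|I \cap J| = |I' \cap J|$, matching the $A_{n-1}$ block. For the bottom-right block, write $I = I' \cup \{n\}$, $J = J' \cup \{n\}$; then $|I \cap J| = |I' \cap J'|+1$ produces the desired minus sign. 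The condition ``$I_t \cap J$ is a prefix of $I_t$'' forces $I_t \subseteq J$ (since prefixes containing $n$ equal $I_t$), which in turn forces $n-1 \in J'$ whenever $n-1 \in I'$, i.e., $n-1 \notin I' \setminus J'$ — exactly the auxiliary condition defining $B_{n-1}$. Conversely, the $B_{n-1}$ conditions on $(I',J')$ force the last run of $I'$ to be fully contained in $J'$, hence $I_t \subseteq J$. Thus both the prefix structure and the sign match $-B_{n-1}$, completing the induction. The bottom-right block of $B_n$ is handled identically, noting that for $n \in J$ the extra condition $n \notin I \setminus J$ is automatic, so $B_n$ and $A_n$ agree on this block.
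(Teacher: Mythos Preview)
Your proof is correct and follows essentially the same approach as the paper: both define $A_n$ and $B_n$ by the explicit formulas and then verify, by induction on $n$, that the four $2^{n-1}\times 2^{n-1}$ blocks match the recursion in Definition~\ref{d.AB_recursion}, with the bottom-right block analyzed via the effect of removing $n$ on the last run $I_t$. The only difference is cosmetic---the paper splits the bottom-right block into the three cases $n-1\notin I$, $n-1\in I\cap J$, and $n-1\in I\setminus J$, whereas you package these into the single observation that the prefix condition on $I_t$ (with $n\in J$) is equivalent to $I_t\subseteq J$, which is equivalent to $n-1\notin I'\setminus J'$.
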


\begin{proof}
It will be convenient here to define $A_n$ and $B_n$ explicitly as in the lemma,
and then show that they satisfy the recursions in Definition~\ref{d.AB_recursion}.

We shall start with $A_n$. Clearly $A_0 = (1)$.

For $I, J \in P_n$ ($n \ge 1$)
denote $I' := I \setminus \{n\}$ and $J' := J \setminus \{n\}$.

The ``upper left'' quarter of $A_n$ corresponds to $I, J \in P_n$
such that $n \not\in I$ and $n \not\in J$. In this case, clearly
$a_{I,J}$ in $A_n$ is the same as $a_{I',J'}$ in $A_{n-1}$.

Similarly when $n \not\in I$ and $n \in J$, and also when $n \in I$ and $n \not\in J$:
$|I \cap J| = |I' \cap J'|$, and $I_k \cap J$ is a prefix of $I_k$ for all $k$
if and only if $I'_k \cap J$ is a prefix of $I'_k$ for all $k$.

The ``lower right'' quarter of $A_n$ corresponds to $I, J \in P_n$
such that $n \in I \cap J$.
If $n-1 \not\in I$ then $I_k \cap J$ is a prefix of $I_k$ for all $k$
if and only if $I'_k \cap J$ is a prefix of $I'_k$ for all $k$.
Also $|I \cap J| = |I' \cap J'| + 1$, so that $a_{I,J}$ in $A_n$ is equal to
$-a_{I',J'}$ in $A_{n-1}$ and also to $-b_{I',J'}$ in $B_{n-1}$
(since $n-1 \not\in I'$ so $n-1 \not\in I' \setminus J'$).
If $n-1 \in I \cap J$ then, again, $a_{I,J}$ in $A_n$ is equal to
$-a_{I',J'}$ in $A_{n-1}$ and also to $-b_{I',J'}$ in $B_{n-1}$
(since $n-1 \in J'$ so $n-1 \not\in I' \setminus J'$).
Finally, if $n-1 \in I$ but $n-1 \not\in J$ then, for the last run $I_t$ of $I$,
$I_t \cap J$ is not a prefix of $I_t$, and thus $a_{I,J} = 0$ in $A_n$ as well as
$-b_{I',J'} = 0$ in $B_{n-1}$ (since $n-1 \in I' \setminus J'$).

We have proved the recursion for $A_n$.
The entries of $B_n$ are equal to the corresponding entries of $A_n$,
except for those in the quarter corresponding to $(I,J)$ with $n \in I$ and $n \not\in J$,
which are all zeros  (since $n \in I \setminus J$). This proves the recursion for $B_n$ as well.

\end{proof}

\medskip
\subsection{Determinants}\ 
\medskip

It turns out that the invertibility of $A_n$ is the key factor in an answer
to Question~\ref{q.invertible} (see Observation~\ref{t.fine_matrix}).
We can prove invertibility by actually computing the determinant.

\begin{theorem}\label{t.An-determinant}
$A_n$ and $B_n$ are invertible for all $n \ge 0$. In fact,
\[
\det(A_n) = (n+1) \cdot \prod_{k=1}^{n} k^{2^{n-1-k} (n+4-k)} \qquad(n \ge 2)
\]
while $\det(A_0) = 1$ and $\det(A_1) = -2$, and
\[
\det(B_n) = \prod_{k=1}^{n} k^{2^{n-1-k} (n+2-k)} \qquad(n \ge 2)
\]
while $\det(B_0) = 1$ and $\det(B_1) = -1$.
\end{theorem}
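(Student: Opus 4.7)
The approach combines a pair of block matrix factorizations with an inductive telescoping argument suited to the recursive definitions of $A_n$ and $B_n$.

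First, observe that the recursive definition of $B_n$ is already block upper-triangular, so immediately
\[
\det(B_n) = (-1)^{2^{n-1}} \det(A_{n-1}) \det(B_{n-1}),
\]
with sign $+1$ for $n \ge 2$. For $A_n$, a straightforward block multiplication verifies
\[
A_n = \begin{pmatrix} I & 0 \\ I & I \end{pmatrix} \begin{pmatrix} A_{n-1} & A_{n-1} \\ 0 & -(A_{n-1} + B_{n-1}) \end{pmatrix},
\]
which yields
\[
\det(A_n) = (-1)^{2^{n-1}} \det(A_{n-1}) \det(A_{n-1} + B_{n-1}).
\]

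Second, to evaluate the mixed determinant, introduce the one-parameter family $c_m(\alpha) := \det(A_m + \alpha B_m)$. The analogous factorization, with left factor $\left(\begin{smallmatrix} (1+\alpha) I & 0 \\ I & I \end{smallmatrix}\right)$, gives the key recursion
\[
c_m(\alpha) = (-(1+\alpha))^{2^{m-1}} \det(A_{m-1}) \cdot c_{m-1}(1+\alpha).
\]
Iterating from $c_n(0) = \det(A_n)$ down to $c_0(n) = n+1$ telescopes into the single identity
\[
\det(A_n) = (n+1) \prod_{j=1}^n (-j)^{2^{n-j}} \prod_{m=0}^{n-1} \det(A_m).
\]

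Finally, I prove the closed forms by strong induction on $n$, handling $n = 0, 1$ by direct computation. Dividing two consecutive instances of the telescoped identity converts it into the cleaner one-step relation $\det(A_n) = -(n+1) \prod_{j=1}^{n-1} (-j)^{2^{n-1-j}} \cdot \det(A_{n-1})^2$, after using the sign identity $(-1)^{2^{n-1}-1} = -1$ (valid for $n \ge 2$); substituting the inductive formula and simplifying exponents via $2^{n-1-k}(1 + (n+3-k)) = 2^{n-1-k}(n+4-k)$ recovers the claimed expression for $\det(A_n)$. The formula for $\det(B_n)$ then follows by iterating $\det(B_n) = \det(A_{n-1}) \det(B_{n-1})$ (equivalently $\det(B_n) = -\prod_{m=1}^{n-1} \det(A_m)$ for $n \ge 2$) and substituting the $A$-formula. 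The main obstacle is the careful bookkeeping of exponents of $2$: the formally half-integer exponent $2^{-1}$ arising at the endpoint $k = n$ of each product only becomes integral through the doubling effect of the $\det(A_{n-1})^2$ in the one-step recursion, and one must keep track of this consistently. Invertibility of $A_n$ and $B_n$ is immediate once the formulas are established, since no factor in either product vanishes.
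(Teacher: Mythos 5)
Your proposal is correct, and the heart of the argument is the same as the paper's: the key recursions $\det(A_n) = \pm\det(A_{n-1})\det(A_{n-1}+B_{n-1})$ and $\det(A_m+\alpha B_m) = \pm(1+\alpha)^{2^{m-1}}\det(A_{m-1})\det(A_{m-1}+(1+\alpha)B_{m-1})$, followed by telescoping from $\det(A_0+nB_0)=n+1$. The one genuine difference is cosmetic but worth noting: the paper postmultiplies by the M\"obius matrix $M_n$ to obtain the block-triangular matrices $AM_n$, $BM_n$ (exploiting $\det M_n = 1$), and these transformed matrices are then reused throughout Sections 4--5 for entry formulas, inverses, and column sums. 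You instead exhibit an explicit $2\times 2$ block $LU$-type factorization of $A_n$ (and of $A_m+\alpha B_m$) directly, with a unipotent left factor, which is more elementary and self-contained since it avoids introducing the M\"obius machinery when all you want is the determinant. In effect, your left factor $\left(\begin{smallmatrix} I & 0\\ I & I\end{smallmatrix}\right)$ encodes the same block column operation that the paper performs globally via $M_n$. Both routes arrive at the identical one-step recursion $\det(A_n) = (n+1)\prod_{j=1}^{n-1} j^{2^{n-1-j}}\det(A_{n-1})^2$ (your sign identity $(-1)^{2^{n-1}-1}=-1$ accounts for the total parity of the $(-j)$ factors), and the bookkeeping observation about the half-integer exponent at $k=n$ being rescued by the square is a fair warning but not a gap. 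The $\det(B_n)$ derivation via $\det(B_n)=\det(A_{n-1})\det(B_{n-1})$ for $n\ge 2$ likewise matches the paper.
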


A proof of  Theorem~\ref{t.An-determinant} will be given in
Section~\ref{section:AM_BM}. For comparison,
\[
\det(H_n) = 2^{2^{n - 1} n} \qquad(n \ge 2)
\]
with $\det(H_0) = 1$ and $\det(H_1) = -2$.

\medskip
\subsection{Eigenvalues}\ 
\medskip

Consider the matrix
\[
A_2 = \left(\begin{array}{cccc}
1 & 1  & 1 & 1 \\
1 & -1  & 1 & -1\\
1 & 1 & -1 & -1 \\
1 & -1 & 0 & 1
\end{array}\right).
\]
As an asymmetric matrix, it might conceivably have non-real eigenvalues.
Surprisingly, computation shows that its characteristic polynomial is
\[
(x^2 - 3)(x^2 - 4),
\]
and thus all its eigenvalues are (up to sign) square roots of
positive integers!



This is not a coincidence.
The following combinatorial description of the eigenvalues of $A_n$ and $B_n$,
which was stated as a conjecture in an earlier version of this paper,
has recently been proved by Gil Alon.

\begin{theorem}\label{t.GilAlon} {\rm (G.\ Alon~\cite{Alon})}\ 
\begin{itemize}
\item[$(i)$]
The roots of the characteristic polynomial of $A_n$ are in $2:1$ correspondence
with the compositions of $n$: each composition $\mu = (\mu_1, \ldots, \mu_t)$ of $n$
corresponds to a pair of eigenvalues $\pm \sqrt{\pi_\mu}$ of $A_n$, where
\[
\pi_\mu := \prod_{i=1}^{t} (\mu_i+1).
\]
\item[$(ii)$]
The roots of the characteristic polynomial of $B_n$ are in $2:1$ correspondence
with the compositions of $n$: each composition $\mu = (\mu_1, \ldots, \mu_t)$ of $n$
corresponds to a pair of eigenvalues $\pm \sqrt{\pi'_\mu}$ of $B_n$, where
\[
\pi'_\mu := \prod_{i=1}^{t-1} (\mu_i+1).
\]
\end{itemize}
\end{theorem}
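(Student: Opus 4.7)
The plan is to induct on $n$, treating $B_n$ first (easy) and then $A_n$ (hard). The base case $n=0$ is trivial. For $B_n$, Definition~\ref{d.AB_recursion} exhibits $B_n$ as block upper triangular, hence
\[
\det(xI - B_n) = \det(xI - A_{n-1}) \cdot \det(xI + B_{n-1}).
\]
By the induction hypothesis the spectra of $A_{n-1}$ and $B_{n-1}$ are symmetric about zero, so $\det(xI + B_{n-1}) = \det(xI - B_{n-1})$, and the spectrum of $B_n$ (with multiplicity) is the disjoint union of the spectra of $A_{n-1}$ and $B_{n-1}$. Now split compositions of $n$ into those with last part $1$ and those with last part $\ge 2$: send $\mu = (\mu_1,\ldots,\mu_{t-1},1)$ to $(\mu_1,\ldots,\mu_{t-1})$ in the ``$A$-copy,'' and $\mu = (\mu_1,\ldots,\mu_t)$ with $\mu_t \ge 2$ to $(\mu_1,\ldots,\mu_t-1)$ in the ``$B$-copy.'' One checks $\pi'_\mu = \pi_{(\mu_1,\ldots,\mu_{t-1})}$ in the first case and $\pi'_\mu = \pi'_{(\mu_1,\ldots,\mu_t-1)}$ in the second, matching the two spectra correctly.

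For $A_n$ the recursion is not triangular. Applying the Schur complement to the top-left block of $xI - A_n$ and using the operator identity $A^2(xI-A)^{-1} = x^2(xI-A)^{-1} - xI - A$, one obtains after simplification
\[
\det(xI - A_n) = \det\!\left[\,x^2 I + x(B_{n-1} - A_{n-1}) - A_{n-1}(A_{n-1} + B_{n-1})\,\right].
\]
Write this as $\det(x^2 I + xC - D)$ with $C := B_{n-1} - A_{n-1}$ and $D := A_{n-1}(A_{n-1} + B_{n-1})$. Since $A_n - B_n = \begin{pmatrix} 0 & 0 \\ A_{n-1} & 0 \end{pmatrix}$, the matrix $C$ is nilpotent with $C^2 = 0$. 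The theorem now reduces to: (a) $\det(x^2 I + xC - D)$ is even in $x$, i.e., the spectrum of $A_n$ is $\pm$-symmetric; and (b) the eigenvalues of $D$, with multiplicity, are exactly $\{\pi_\mu : \mu \text{ a composition of } n\}$.

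For (b), I would block-expand $D$ using the recursive forms of $A_{n-1}$ and $B_{n-1}$ in terms of $A_{n-2}, B_{n-2}$, obtaining a $2 \times 2$ block recursion whose solution yields $\pi_\mu = \prod_i(\mu_i + 1)$ by induction, one factor emerging at each step. Small cases confirm this: for $n = 3$, $D$ has characteristic polynomial $(x - 4)(x - 6)^2(x - 8)$, matching $\pi_{(3)} = 4$, $\pi_{(2,1)} = \pi_{(1,2)} = 6$, $\pi_{(1,1,1)} = 8$. I expect the real obstacle to be (a)---the $\pm$-symmetry of the spectrum of $A_n$---which is not visible in either the explicit or the recursive definition. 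A clean resolution would be an explicit similarity $P_n A_n P_n^{-1} = -A_n$ constructed inductively; for $n = 1$ the involution $P_1 = \begin{pmatrix}1 & 0 \\ -2 & -1\end{pmatrix}$ works, and finding a general $P_n$ compatible with the way the recursion mixes $A_{n-1}$ and $B_{n-1}$ is likely to be the crux.
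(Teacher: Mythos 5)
The paper does not give a proof of this theorem: it is stated with attribution to G.~Alon and the text says it ``has recently been proved by Gil Alon,'' citing~\cite{Alon}. So there is no proof in the paper against which to compare; I can only assess the proposal on its own terms.

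Your treatment of $B_n$ is sound as a reduction: the block-triangular form does give $\det(xI-B_n)=\det(xI-A_{n-1})\det(xI+B_{n-1})$, the evenness of the characteristic polynomial of $B_{n-1}$ (from the inductive hypothesis) converts $\det(xI+B_{n-1})$ into $\det(xI-B_{n-1})$, and the two bijections on compositions match the two spectra correctly. This must be run as a single \emph{joint} induction with part~1, since you need the spectrum of $A_{n-1}$ as well, but you clearly have that structure in mind.

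The Schur-complement identity $\det(xI-A_n)=\det\bigl[x^2I+xC-D\bigr]$ with $C=B_{n-1}-A_{n-1}$ and $D=A_{n-1}(A_{n-1}+B_{n-1})$ is correct (one factors the complement as $(xI-A_{n-1})^{-1}\bigl[(xI-A_{n-1})(xI+B_{n-1})-A_{n-1}^2\bigr]$ and the outer $\det(xI-A_{n-1})$ cancels), and $C^2=0$ as you say. However, the claimed reduction to (a) evenness and (b) $\operatorname{spec}(D)=\{\pi_\mu\}$ has a genuine logical gap. What the theorem needs is the identity
\[
\det\bigl(x^2I+xC-D\bigr)=\det\bigl(x^2I-D\bigr),
\]
and (a) alone does not deliver this. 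Expanding by multilinearity in the columns gives $\det(x^2I+xC-D)=\sum_{S}x^{|S|}p_S(x^2)$, where the $S=\emptyset$ term is $\det(x^2I-D)$; evenness forces the odd-$|S|$ contributions to vanish, but contributions with $|S|\ge 2$ even remain and are not killed by evenness alone. (For small $n$ they do happen to vanish — e.g.\ for $n=3$ one checks directly that $\det(x^2I+xC-D)=(x^2-6)^2(x^2-4)(x^2-8)=\det(x^2I-D)$, after a cancellation of linear-in-$x$ terms inside a $3\times3$ minor — but this is a structural fact about $C$ and $D$ that needs its own argument, not a formal consequence of evenness.) Beyond that, neither (a) nor (b) is actually established: (b) is only gestured at, and for (a) you yourself identify the proposed similarity $P_nA_nP_n^{-1}=-A_n$ as ``the crux'' and verify it only for $n=1$. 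So as it stands the proposal is an incomplete plan with one incorrect reduction step, rather than a proof.
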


Surprising connections between the eigenvalues and the diagonal elements of $A_n^2$,
as well as the column sums of some related matrices, appear in Theorem~\ref{t.col_sums}
and Corollary~\ref{t.ev_eq_diag} below.


\section{M\"obius inversion}\label{section:AM_BM}

In this section we prove Theorem~\ref{t.An-determinant}.
Our approach is to study certain matrices, with more transparent structure, obtained from
$A_n$ and $B_n$ by a transformation corresponding to (poset theoretic) M\"obius inversion.

\medskip
\subsection{Auxiliary definitions}\ 
\medskip

Let us define certain auxiliary families of matrices.

\begin{defn}\label{d.ZM_recursion}
Define, recursively,
\[
Z_n = \left(\begin{array}{cc}
Z_{n-1} & Z_{n-1} \\
0 & Z_{n-1}
\end{array}\right)
\qquad(n \ge 1)
\]
with $Z_0 = (1)$, as well as
\[
M_n = \left(\begin{array}{cc}
M_{n-1} & -M_{n-1} \\
0 & M_{n-1}
\end{array}\right)
\qquad(n \ge 1)
\]
with $M_0 = (1)$.
\end{defn}

$Z_n$ is the {\em zeta matrix} of the poset $P_n$ with respect
to {\em set inclusion} (not with respect to its linear extension,
described in Definition~\ref{def.Pn}).
Thus $Z_n = (z_{I,J})_{I,J \in P_n}$ is a square matrix, with entries satisfying
\[
z_{I,J} =
\begin{cases}
1, & \text{if $I \subseteq J$;}\\
0, & \text{otherwise.}
\end{cases}
\]

$M_n = Z_n^{-1}$ is the corresponding {\em M\"obius matrix},
expressing the M\"obius function (see~\cite{Rota}) of the poset $P_n$.
Thus $M_n = (m_{I,J})_{I,J \in P_n}$ has entries satisfying
\[
m_{I,J} =
\begin{cases}
(-1)^{|J \setminus I|}, & \text{if $I \subseteq J$;}\\
0, & \text{otherwise.}
\end{cases}
\]

\begin{defn}
Denote $AM_n := A_n M_n$, $BM_n := B_n M_n$ and $HM_n := H_n M_n$.
\end{defn}

It follows from Definitions~\ref{d.AB_recursion} and~\ref{d.ZM_recursion} that
\begin{equation}\label{eq.AMn}
AM_n = \left(\begin{array}{cc}
AM_{n-1} & 0 \\
AM_{n-1} & -(AM_{n-1} + BM_{n-1})
\end{array}\right)
\qquad(n \ge 1)
\end{equation}
with $AM_0 = (1)$ and
\begin{equation}\label{eq.BMn}
BM_n = \left(\begin{array}{cc}
AM_{n-1} & 0 \\
0 &  -BM_{n-1}
\end{array}\right)
\qquad(n \ge 1)
\end{equation}
with $BM_0 = (1)$, as well as 
\begin{equation}\label{eq.HMn}
HM_n = \left(\begin{array}{cc}
HM_{n-1} & 0 \\
HM_{n-1} & -2HM_{n-1}
\end{array}\right)
\qquad(n \ge 1)
\end{equation}
with $HM_0 = (1)$.

The block triangular form of $AM_n$ and block diagonal form of $BM_n$ facilitate
a recursive computation of the determinants of $A_n$ and $B_n$.

\medskip
\subsection{A proof of Theorem~\ref{t.An-determinant}}\ 
\medskip

By recursion~(\ref{eq.BMn}),
\[
\det(BM_n) = \det(AM_{n-1}) \det(-BM_{n-1}) \qquad(n \ge 1).
\]
Now $M_n$ is an upper triangular matrix with $1$-s on its diagonal, so that
\[
\det(M_n) = 1 \qquad (n \ge 0).
\]
We conclude that
\begin{equation}\label{eq.Bn}
\det(B_n) = \det(A_{n-1}) \det(B_{n-1}) \qquad(n\ge 2)
\end{equation}
while
\[
\det(B_1) = - \det(A_0) \det(B_0).
\]
Similarly, for any scalar $t$,
\[
AM_n + tBM_n = \left(\begin{array}{cc}
(t+1)AM_{n-1} & 0 \\ AM_{n-1} & -AM_{n-1} - (t+1)BM_{n-1}
\end{array}\right)
\qquad(n \ge 1)
\]
and a similar argument yields
\[
\det(A_n + tB_n) = \det((t+1)A_{n-1})
\det(A_{n-1} + (t+1)B_{n-1}) \qquad(n\ge 2)
\]
and
\[
\det(A_1 + tB_1) = - \det((t+1)A_0)
\det(A_0 + (t+1)B_0).
\]

It follows that
\begin{align*}
\det(A_n)
&= \det(A_{n-1}) \det(A_{n-1} + B_{n-1}) \\
&=\det(A_{n-1}) \det(2 A_{n-2}) \det(A_{n-2} + 2B_{n-2}) \\
&= \ldots \\
& = - \left( \prod_{k=1}^{n} \det(k A_{n-k}) \right) \cdot \det(A_0 + n B_0) = \\
& = - (n+1) \cdot \prod_{k=1}^{n} k^{2^{n-k}} \cdot \prod_{k=1}^{n} \det(A_{n-k}) \qquad(n \ge 1).
\end{align*}
Since $A_0 = (1)$ it follows that $\det(A_n) \ne 0$ for any
nonnegative integer $n$, and therefore
\[
\det(A_n)  / \det(A_{n-1}) =
\frac{-(n+1)}{-n} \cdot n \cdot \prod_{k=1}^{n-1} k^{2^{n-1-k}} \cdot \det(A_{n-1}) \qquad(n\ge 2).
\]
The solution to this recursion, with initial value $\det(A_1) = -2$, is
\[
\det(A_n) = (n+1) \cdot \prod_{k=1}^{n} k^{2^{n-1-k} (n+4-k)} \qquad(n \ge 2).
\]
Recursion (\ref{eq.Bn}) above, with initial value $\det(B_1) = -1$, now yields
\[
\det(B_n) = \prod_{k=1}^{n} k^{2^{n-1-k} (n+2-k)} \qquad(n \ge 2).
\]
For comparison, by recursion~\ref{eq.HMn},
\[
\det(H_n) = 2^{2^{n-1}} \det(H_{n-1})^2 \qquad(n\ge 2)
\]
with initial value $\det(H_1) = -2$, so that
\[
\det(H_n) = 2^{2^{n - 1} n} \qquad(n \ge 2).
\]
\qed

\begin{remark}
We can also write
\[
\det(A_n) = \prod_{k=1}^{n+1} k^{a_{n+1-k}}\qquad(n \ge 2),
\]
where the sequence $(a_0, a_1, \ldots) = (1, 2, 5, 12, 28, 64, \ldots)$
coincides with \cite[sequence A045623]{Sloane}.
\end{remark}

\section{Properties of the transformed matrices}\label{section:matrix_entries}

In this section we describe some additional properties of
the transformed matrices $AM_n$ and $BM_n$, namely:
Explicit expressions for their entries,
for the entries of their inverses,
and for their row sums and column sums.
The latter are surprisingly related to the eigenvalues of $A_n$ and $B_n$
described in Theorem~\ref{t.GilAlon} above.

The proofs of most of the results in this section follow from
recursion formulas (\ref{eq.AMn}) and (\ref{eq.BMn}), 
and are therefore indicated only when additional ingredients are present.

\medskip
\subsection{Matrix entries}\ 
\medskip

We shall now compute explicitly the entries of $AM_n$
and $BM_n$, starting with $HM_n$ as a ``baby case''.

\begin{example}
\[
HM_3 = \left(\begin{array}{cccccccc}
1 & 0 & 0 & 0 & 0 & 0 & 0 & 0 \\
1 & -2 & 0 & 0 & 0 & 0 & 0 & 0 \\
1 & 0 & -2 & 0 & 0 & 0 & 0 & 0 \\
1 & -2 & -2 & 4 & 0 & 0 & 0 & 0 \\
1 & 0 & 0 & 0 & -2 & 0 & 0 & 0 \\
1 & -2 & 0 & 0 & -2 & 4 & 0 & 0 \\
1 & 0 & -2 & 0 & -2 & 0 & 4 & 0 \\
1 & -2 & -2 & 4 & -2 & 4 & 4 & -8
\end{array}\right)
\]
\end{example}
This generalizes to an explicit description of the entries of $HM_n$,
which follows easily from recursion~(\ref{eq.HMn}).


\begin{lemma}\label{t.HMentries}\
\[
(HM_n)_{I,J} \ne 0 \iff J \subseteq I
\]
and
\[
(HM_n)_{I,J} \ne 0 \,\then\, (HM_n)_{I,J} = (-2)^{|J|}.
\]
\end{lemma}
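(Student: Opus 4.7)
The plan is a straightforward induction on $n$ using the recursion~(\ref{eq.HMn}). The base case $n=0$ is immediate: $HM_0 = (1)$, the single index is $\emptyset$, and indeed $1 = (-2)^{|\emptyset|}$ with $\emptyset \subseteq \emptyset$.

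For the induction step, I would split into four cases according to whether $n \in I$ and/or $n \in J$, which exactly matches the four blocks in the recursion. Under the anti-lexicographic order of Definition~\ref{def.Pn}, subsets not containing $n$ precede those containing $n$, so the upper-left block corresponds to $n \notin I, n \notin J$; the upper-right to $n \notin I, n \in J$; the lower-left to $n \in I, n \notin J$; the lower-right to $n \in I, n \in J$. Writing $I' := I \setminus \{n\}$ and $J' := J \setminus \{n\}$, the recursion gives:
\begin{itemize}
\item If $n \notin I, n \notin J$: $(HM_n)_{I,J} = (HM_{n-1})_{I,J}$; and $J \subseteq I$ iff $J \subseteq I$ (in $P_{n-1}$), with $|J|$ unchanged.
\item If $n \notin I, n \in J$: $(HM_n)_{I,J} = 0$, consistent with $J \not\subseteq I$.
\item If $n \in I, n \notin J$: $(HM_n)_{I,J} = (HM_{n-1})_{I',J}$; and $J \subseteq I$ iff $J \subseteq I'$, with $|J|$ unchanged.
\item If $n \in I, n \in J$: $(HM_n)_{I,J} = -2\cdot (HM_{n-1})_{I',J'}$; and $J \subseteq I$ iff $J' \subseteq I'$, with $|J'| = |J|-1$, so the inductive value $(-2)^{|J|-1}$ is multiplied by $-2$ to give $(-2)^{|J|}$.
\end{itemize}
In every case both claims of the lemma follow from the induction hypothesis applied to the relevant smaller matrix.

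There is essentially no obstacle here; the entire content is bookkeeping to make sure that the block structure of $HM_n$ is matched against the indexing convention, and that the sign and power of $2$ are tracked correctly in the fourth case. The only point worth verifying carefully is that the anti-lexicographic order indeed places subsets containing $n$ after those that do not, so that the four quarters of $HM_n$ in the recursion really correspond to the four cases above; this is immediate from Definition~\ref{def.Pn}, since $n$ is the largest element of $[n]$.
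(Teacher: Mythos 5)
Your induction on $n$ via recursion~(\ref{eq.HMn}), splitting into the four blocks according to whether $n$ lies in $I$ and/or $J$, is exactly the argument the paper has in mind (it states only that the lemma "follows easily from recursion~(\ref{eq.HMn})" and omits the details). The case analysis and bookkeeping are correct, so the proof is complete.
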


The corresponding results for $AM_n$ and $BM_n$ are much more subtle
(and interesting). Their proofs follow, in general, from recursions (\ref{eq.AMn}) and (\ref{eq.BMn}).

\begin{lemma}\label{t.LT}
For every $n\ge 0$, the matrices $AM_n$ and $BM_n$ are lower
triangular.
\end{lemma}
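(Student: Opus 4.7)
The plan is a short induction on $n$, using the recursions (\ref{eq.AMn}) and (\ref{eq.BMn}) together with the specific way rows and columns of these matrices are indexed by $P_n$.

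The key preliminary observation is that the anti-lexicographic order on $P_n$ (Definition~\ref{def.Pn}) places every subset not containing $n$ strictly before every subset containing $n$: indeed, if $n\in J\setminus I$ then $n$ is automatically the largest element of $I\triangle J$, so $I<J$. Consequently, when we write a $2^n\times 2^n$ matrix indexed by $P_n$ in its natural $2\times 2$ block form corresponding to the dichotomy $n\notin\cdot$ versus $n\in\cdot$, the top-right block consists precisely of the entries $(I,J)$ with $I<J$ in the relevant order, i.e.\ the strictly upper-triangular region; the two diagonal blocks are indexed by $P_{n-1}$ (after deleting $n$), again in anti-lexicographic order, and the strictly lower-triangular region is covered by the bottom-left block.

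With this setup, the induction is essentially automatic. Base: $AM_0=BM_0=(1)$ is trivially lower triangular. Step: assume $AM_{n-1}$ and $BM_{n-1}$ are lower triangular. In the recursion
\[
AM_n=\begin{pmatrix} AM_{n-1} & 0 \\ AM_{n-1} & -(AM_{n-1}+BM_{n-1})\end{pmatrix},
\]
the upper-right block is exactly the zero block, which kills the only entries that could violate lower triangularity across the block dichotomy; the two diagonal blocks $AM_{n-1}$ and $-(AM_{n-1}+BM_{n-1})$ are lower triangular by the induction hypothesis (a sum of lower triangular matrices of the same shape is lower triangular); and the lower-left block lies strictly below the diagonal, so its entries are unconstrained. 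Hence $AM_n$ is lower triangular. For $BM_n$ the argument is even cleaner: the recursion exhibits $BM_n$ as block diagonal with diagonal blocks $AM_{n-1}$ and $-BM_{n-1}$, both lower triangular by induction, so $BM_n$ is lower triangular as well.

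There is no real obstacle in this proof — the only point to be careful about is the alignment between the combinatorial ordering of $P_n$ and the block form produced by the recursions, which is exactly why the anti-lexicographic order (rather than, say, the lexicographic one) was chosen in Definition~\ref{def.Pn}. Once that alignment is explicitly noted, the induction reduces to reading off the shape of each block in (\ref{eq.AMn}) and (\ref{eq.BMn}).
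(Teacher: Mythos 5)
Your proof is correct and follows exactly the route the paper indicates (the paper omits the explicit argument, noting that results in that section "follow from recursion formulas (\ref{eq.AMn}) and (\ref{eq.BMn})"). The one substantive point you add, and rightly so, is the explicit check that the anti-lexicographic order on $P_n$ is compatible with the $2\times 2$ block decomposition by membership of $n$, so that the zero upper-right block in the recursions is precisely the strictly supra-diagonal region; with that alignment noted, the induction is immediate.
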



\begin{corollary}
$(AM_n) \cdot Z_n$ is an $LU$ factorization of $A_n$; similarly
for $B_n$.
\end{corollary}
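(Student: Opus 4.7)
The plan is to recognize that the corollary is essentially an immediate consequence of three facts: the definitional identity $AM_n = A_n M_n$, the fact that $M_n = Z_n^{-1}$, and the triangularity properties. My proof will proceed by verifying each piece in turn.

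First I would observe that, by definition, $AM_n := A_n M_n$, and since $Z_n$ is the zeta matrix of the Boolean poset $P_n$ with $M_n$ as its M\"obius inverse, we have $M_n Z_n = I$. Consequently,
\[
(AM_n) \cdot Z_n = A_n M_n Z_n = A_n,
\]
and similarly $(BM_n) \cdot Z_n = B_n$. So the factorization is correct; what remains is to verify that it is of $LU$ type, i.e.\ that the left factor is lower triangular and the right factor is upper triangular (with $1$s on the diagonal, which makes the factorization unique).

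Next I would invoke Lemma~\ref{t.LT} to conclude that $AM_n$ and $BM_n$ are lower triangular with respect to the anti-lexicographic order of Definition~\ref{def.Pn}. For $Z_n$, I would verify that $z_{I,J}\ne 0$ implies $I\le J$ in the anti-lex order. Indeed, if $I\subsetneq J$ then $I\triangle J = J\setminus I$ is nonempty, and its largest element $m$ lies in $J$, so by Definition~\ref{def.Pn} we have $I<J$; and $z_{I,I}=1$. Thus $Z_n$ is upper triangular with $1$s on the diagonal. Combining these observations proves the corollary for both $A_n$ and $B_n$.

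There is no serious obstacle here; the only mild point to check is the order-theoretic compatibility between the subset order (which governs $Z_n$) and the anti-lexicographic linear extension (which governs the statement of triangularity), and this is what the one-line check above handles. The corresponding statement for $B_n$ is verbatim the same argument with $AM_n$ replaced by $BM_n$ and $A_n$ by $B_n$.
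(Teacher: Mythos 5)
Your proof is correct and follows the same (essentially immediate) route the paper intends: the identity $A_n = (A_n M_n)(M_n^{-1}) = (AM_n)\cdot Z_n$, lower triangularity of $AM_n$ (resp.\ $BM_n$) from Lemma~\ref{t.LT}, and unit upper triangularity of $Z_n$ under the anti-lexicographic order. Your explicit check that set inclusion is compatible with the anti-lexicographic linear extension is exactly the small point the paper leaves implicit, so nothing is missing.
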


\begin{example}
\[
AM_1 = \left(\begin{array}{cc}
1 & 0 \\
1 & -2
\end{array}\right)
\qquad BM_1 = \left(\begin{array}{cc}
1 & 0 \\
0 & -1
\end{array}\right)
\]

\[
AM_2 = \left(\begin{array}{cccc}
1 & 0 & 0 & 0 \\
1 & -2 & 0 & 0 \\
1 & 0 & -2 & 0 \\
1 & -2 & -1 & 3
\end{array}\right)
\qquad BM_2 = \left(\begin{array}{cccc}
1 & 0 & 0 & 0 \\
1 & -2 & 0 & 0 \\
0 & 0 & -1 & 0 \\
0 & 0 & 0 & 1
\end{array}\right)
\]

\[
AM_3 = \left(\begin{array}{cccccccc}
1 & 0 & 0 & 0 & 0 & 0 & 0 & 0 \\
1 & -2 & 0 & 0 & 0 & 0 & 0 & 0 \\
1 & 0 & -2 & 0 & 0 & 0 & 0 & 0 \\
1 & -2 & -1 & 3 & 0 & 0 & 0 & 0 \\
1 & 0 & 0 & 0 & -2 & 0 & 0 & 0 \\
1 & -2 & 0 & 0 & -2 & 4 & 0 & 0 \\
1 & 0 & -2 & 0 & -1 & 0 & 3 & 0 \\
1 & -2 & -1 & 3 & -1 & 2 & 1 & -4
\end{array}\right)
\]
\[
BM_3 = \left(\begin{array}{cccccccc}
1 & 0 & 0 & 0 & 0 & 0 & 0 & 0 \\
1 & -2 & 0 & 0 & 0 & 0 & 0 & 0 \\
1 & 0 & -2 & 0 & 0 & 0 & 0 & 0 \\
1 & -2 & -1 & 3 & 0 & 0 & 0 & 0 \\
0 & 0 & 0 & 0 & -1 & 0 & 0 & 0 \\
0 & 0 & 0 & 0 & -1 & 2 & 0 & 0 \\
0 & 0 & 0 & 0 & 0 & 0 & 1 & 0 \\
0 & 0 & 0 & 0 & 0 & 0 & 0 & -1
\end{array}\right)
\]
\end{example}

Apparently, $AM_n$ has the same zero pattern and the same sign
pattern as $HM_n$. $BM_n$ also has the same sign pattern, but has
more zero entries. The absolute values of entries in both families
are more intricate than in $HM_n$.

\begin{theorem}\label{t.AB_entries} {\rm (Entries of $AM_n$ and $BM_n$)}
\begin{itemize}
\item[$(i)$] {\bf Zero pattern:}
\[
(AM_n)_{I,J} \ne 0 \iff J \subseteq I
\]
and
\[
(BM_n)_{I,J} \ne 0 \iff J \subseteq I \text{\ and\ } \fin(J) =
\fin(I),
\]
where
\[
\fin(I) := \max\{0 \le i \le n \,|\, i \not\in I\} \qquad(\forall
I \in P_n).
\]
\item[$(ii)$] {\bf Signs:}
\[
(AM_n)_{I,J} \ne 0 \then \sign ((AM_n)_{I,J}) = (-1)^{|J|}
\]
and
\[
(BM_n)_{I,J} \ne 0 \then \sign ((BM_n)_{I,J}) = (-1)^{|J|}.
\]
\item[$(iii)$] {\bf Absolute values:}
For $I, J \in P_n$, let $J_1,
\ldots, J_t$ be the runs (maximal consecutive intervals) in $J$.
For $J_k = \{m_k + 1, \ldots, m_k + \ell_k\}$ $(1 \le k \le t)$,
let
\[
c_k(I) =
\begin{cases}
0, & \text{if $m_k \in I$;} \\
1, & \text{otherwise.}
\end{cases}
\]
Then
\[
(AM_n)_{I,J} \ne 0 \then |(AM_n)_{I,J}| = \prod_{k=1}^{t} (|J_k| +
1)^{c_k(I)}
\]
and
\[
(BM_n)_{I,J} \ne 0 \then |(BM_n)_{I,J}| = \prod_{k=1}^{t'} (|J_k|
+ 1)^{c_k(I)},
\]
where
\[
t' =
\begin{cases}
t-1, & \text{if $n \in I$ (equivalently, $n \in J$);} \\
t, & \text{otherwise.}
\end{cases}
\]
\end{itemize}
\end{theorem}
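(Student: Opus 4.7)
The plan is to prove all three items simultaneously by induction on $n$, driven by the block recursions (\ref{eq.AMn}) and (\ref{eq.BMn}). The base case $n=0$ is immediate. For the inductive step I partition the rows and columns of both $AM_n$ and $BM_n$ into the four blocks indexed by whether $n \in I$ and whether $n \in J$, and treat each block separately, writing $I' := I \setminus \{n\}$ and $J' := J \setminus \{n\}$.

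The $BM_n$ half is routine: each block is either zero, a copy of $AM_{n-1}$, or a negative copy of $BM_{n-1}$, so all three claims propagate directly from the inductive hypothesis once one checks standard compatibilities. Namely, $J \subseteq I$ is preserved under adding or removing $n$ from both sides; $\fin(I) = \fin(I')$ when $n \in I$, while $\fin(I) = n$ when $n \notin I$ (likewise for $J$); the runs of $J$ and $J'$ coincide except when removing $n$ collapses the last run; and the index $t'$ in the $BM$ absolute-value formula tracks this collapse correctly.

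For $AM_n$, only the lower-right block is subtle, since by (\ref{eq.AMn}) its entry is a \emph{sum} $-(AM_{n-1})_{I',J'} - (BM_{n-1})_{I',J'}$. The crucial observation is that the inductive sign claim forces each nonzero summand to have sign $(-1)^{|J'|+1} = (-1)^{|J|}$, so no cancellation is possible and absolute values add. This immediately delivers the zero pattern (nonzero iff $(AM_{n-1})_{I',J'} \ne 0$, i.e.\ iff $J \subseteq I$) and the sign claim in this block.

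The main obstacle is matching the absolute value in this lower-right block with the predicted product. Writing the last run of $J$ as $J_t = \{m_t+1, \ldots, m_t+\ell_t\}$, I would split into the four cases $\ell_t > 1$ vs.\ $\ell_t = 1$, crossed with $m_t \in I$ vs.\ $m_t \notin I$. When $m_t \in I$ we have $c_t(I) = 0$, the $BM$ summand vanishes by the $\fin$ condition (applied with $n-1$ in place of $n$), and the $AM_{n-1}$ factor alone matches the claimed product once the shortening $J_t \mapsto J'_t$ is accounted for. When $m_t \notin I$ we have $c_t(I) = 1$ and both summands are nonzero; their absolute values then sum to exactly $|J_t|+1$ times the product over the earlier runs. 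The delicate bookkeeping step, needed in both the $\ell_t > 1$ and $\ell_t = 1$ subcases, is verifying that the $BM$ non-vanishing condition $\fin(I') = \fin(J')$ coincides with $m_t \notin I$ under the standing assumption $J \subseteq I$.
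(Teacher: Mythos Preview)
Your proof is correct and follows essentially the same route as the paper: induction on $n$ via the block recursions (\ref{eq.AMn}) and (\ref{eq.BMn}), with the key no-cancellation observation in the lower-right block of $AM_n$ coming from the inductive sign claim. The paper frames the zero-pattern step slightly differently (it compares the recursion for $AM_n$ with that for $HM_n$ after observing that $AM_n + BM_n$ and $AM_n$ share a zero pattern), but this is the same idea in a different wrapper; and for the absolute values the paper simply asserts they are ``easy to confirm by recursion'' without giving the case analysis you supply, so your write-up is strictly more detailed there.
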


\begin{proof}
It is clear from recursion formulas~(\ref{eq.AMn})
and~(\ref{eq.BMn}) that all the entries in column $J$ of $AM_n$
(or $BM_n$) have sign $(-1)^{|J|}$ or are zero, exactly as in
$HM_n$.

Comparison of the two recursions shows that wherever $AM_n$ has a
zero entry so does $BM_n$, but not conversely. The zero pattern of
$AM_n + BM_n$ is therefore the same as that of $AM_n$, and thus
recursions (\ref{eq.AMn}) and~(\ref{eq.HMn}) imply that $AM_n$ and
$HM_n$ have the same zero pattern. The zero pattern of $BM_n$ now
follows from recursion~(\ref{eq.BMn}).

Finally, the explicit formulas for the absolute values of entries
are relevant, of course, only when $J \subseteq I$.
They are a little difficult to come up with, but easy to confirm by recursion.
\end{proof}

\begin{corollary}
Let $I, J \in P_n$ satisfy $J \subseteq I$. Then:
\begin{itemize}
\item[$(i)$]
\[
|(AM_n)_{I,J}| \le  |(HM_n)_{I,J}| = 2^{|J|},
\]
with equality if and only if $|J_k| = 1$ for each $k$ for which
$m_k \not\in I$.
\item[$(ii)$]
\[
|(BM_n)_{I,J}| \le  |(AM_n)_{I,J}|,
\]
with equality if and only if either $n \not\in I$ or $m_t \in I$.
\end{itemize}
\end{corollary}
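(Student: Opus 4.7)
My plan is to derive the corollary directly from the explicit product formulas of Theorem~\ref{t.AB_entries} and Lemma~\ref{t.HMentries}: no induction or recursion is needed, only a factor-by-factor comparison of products. Throughout, the hypothesis $J \subseteq I$ guarantees $(AM_n)_{I,J} \ne 0$, so the absolute-value formula for $AM_n$ is available unconditionally.

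For part 1, I will juxtapose $|(HM_n)_{I,J}| = 2^{|J|} = \prod_{k=1}^{t} 2^{|J_k|}$ with $|(AM_n)_{I,J}| = \prod_{k=1}^{t} (|J_k|+1)^{c_k(I)}$ and compare the $t$ factors one at a time. When $c_k(I) = 0$ the $AM$-factor collapses to $1$ while the $HM$-factor is $2^{|J_k|} \ge 2$, a strict drop; when $c_k(I) = 1$ the elementary inequality $|J_k|+1 \le 2^{|J_k|}$ (valid for $|J_k|\ge 1$, with equality iff $|J_k|=1$) does the work. Multiplying over $k$ yields the bound, and reading off the equality cases in each factor pins down the equality criterion.

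For part 2, I will split according to whether $\fin(J) = \fin(I)$. If not, Theorem~\ref{t.AB_entries}(1) gives $(BM_n)_{I,J} = 0 < |(AM_n)_{I,J}|$, so the inequality is strict. If yes, both absolute values are products of the same form $\prod_k (|J_k|+1)^{c_k(I)}$, the $BM$-product truncating at $k = t-1$ exactly when $n \in I$ (equivalently, under the nonvanishing hypothesis, when $n \in J$). In the truncation case, equality then reduces to the statement that the dropped factor $(|J_t|+1)^{c_t(I)}$ was already $1$, which forces $c_t(I) = 0$, i.e.\ $m_t \in I$. Combining the two cases produces the stated criterion.

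I do not anticipate a substantive obstacle: once Theorem~\ref{t.AB_entries} is in hand, the argument is essentially bookkeeping. The one place where care is required is linking the $BM_n$-nonvanishing condition $\fin(J) = \fin(I)$ to the equivalence $n \in I \iff n \in J$ and to the value of $t'$, which is what makes the $BM$-formula an honest truncation of the $AM$-formula rather than a structurally different product. This identification is what cleanly separates the ``$n \notin I$'' and ``$m_t \in I$'' cases in the equality statement.
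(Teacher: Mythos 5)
Your approach---a factor-by-factor comparison of the product formulas in Theorem~\ref{t.AB_entries}(3) and Lemma~\ref{t.HMentries}---is exactly what the paper does: the paper's entire proof is the one line ``$|J_k| + 1 \le 2^{|J_k|}$, with equality iff $|J_k|=1$,'' and your writeup is a correct expansion of that observation. The inequalities themselves are proved correctly in both parts.

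The gap is in the phrases ``pins down the equality criterion'' and ``produces the stated criterion'': if you carry your own case analysis through to the end, you obtain a \emph{stronger} condition than the one printed, and the printed criteria are in fact false. In part~1 you observe correctly that $c_k(I)=0$ (i.e.\ $m_k\in I$) forces a strict drop in the $k$-th factor, since the $AM$-factor is $1$ while the $HM$-factor is $2^{|J_k|}\ge 2$; so equality requires $m_k\notin I$ \emph{and} $|J_k|=1$ for \emph{every} $k$, not merely $|J_k|=1$ for those $k$ with $m_k\notin I$. Concretely, for $n=2$, $I=\{1,2\}$, $J=\{2\}$ one has $|(AM_2)_{I,J}|=1<2=|(HM_2)_{I,J}|$, yet there is no $k$ with $m_k\notin I$, so the stated criterion is vacuously satisfied. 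In part~2, your split into $\fin(J)=\fin(I)$ versus $\fin(J)\ne\fin(I)$ shows that equality needs $\fin(J)=\fin(I)$ \emph{in addition to} the condition ``$n\notin I$ or $m_t\in I$''; for $n=3$, $I=\{1,2,3\}$, $J=\{2\}$ one has $m_t=1\in I$ but $(BM_3)_{I,J}=0<1=|(AM_3)_{I,J}|$ because $\fin(J)=3\ne 0=\fin(I)$. What your method actually proves is: part~1 equality iff, for all $k$, $m_k\notin I$ and $|J_k|=1$; part~2 equality iff $\fin(J)=\fin(I)$ and ($n\notin I$ or $m_t\in I$). This is a blemish in the corollary's statement (one shared by the paper's own one-line proof), not in your method, but your proof should record the criterion it actually establishes rather than assert agreement with the printed one.
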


\begin{proof}
$|J_k| + 1 \le 2^{|J_k|}$, with equality if and only if $|J_k| =
1$.
\end{proof}

An alternative description of the entries may be given in terms of compositions,
using the correspondence $\mu \longleftrightarrow I(\mu)$ described in
Subsection~\ref{subsection:prelim_compositions} above.

%
%

\begin{theorem}\label{t.AB_entries_comp} {\rm (Entries of $AM_n$ and $BM_n$, composition version)}\\
Let $\la$ and $\mu$ be compositions of $n+1$.
Write $(AM_n)_{\la, \mu}$ instead of $(AM_n)_{I(\la), I(\mu)}$, and similarly for $BM_n$.
\begin{itemize}
\item[$(i)$] {\bf Zero pattern:}
\[
(AM_n)_{\la,\mu} \ne 0 \iff \text{\rm $\mu$ is a refinement of $\la$}
\]
and
\begin{eqnarray*}
(BM_n)_{\la,\mu} \ne 0 &\iff& \text{\rm $\mu$ is a refinement of $\la$ and}\\
& & \text{\rm the last component of $\la$ is unrefined in $\mu$}.
\end{eqnarray*}
\item[$(ii)$] {\bf Signs:}
\[
(AM_n)_{\la,\mu} \ne 0 \,\then\, \sign ((AM_n)_{\la,\mu}) = (-1)^{n+1-\ell(\mu)}
\]
and
\[
(BM_n)_{\la,\mu} \ne 0 \,\then\, \sign ((BM_n)_{\la,\mu}) = (-1)^{n+1-\ell(\mu)},
\]
where  $\ell(\mu)$ is the number of components of $\mu$.
\item[$(iii)$] {\bf Absolute values:}
\[
(AM_n)_{\la,\mu} \ne 0 \,\then\, |(AM_n)_{\la, \mu}| = \prod_i \mu_{\rm init}(\la_i)
\]
and
\[
(BM_n)_{\la,\mu} \ne 0 \,\then\, |(BM_n)_{\la, \mu}| = {\prod_i }'
\mu_{\rm init}(\la_i),
\]
where $\mu_{\rm init}(\la_i)$ is the first component in the subdivision (in $\mu$)
of the component $\la_i$ of $\la$,
and $\prod_i '$ is a product over all values of $i$ except the last one.
\end{itemize}
\end{theorem}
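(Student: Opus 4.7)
The plan is to deduce this theorem entirely from Theorem~\ref{t.AB_entries} via the bijection $\mu \longleftrightarrow I(\mu)$ recalled in Subsection~\ref{subs.prelim_compositions}. Under it, writing $\lambda = (\lambda_1, \ldots, \lambda_s)$ and $\mu = (\mu_1, \ldots, \mu_t)$, the runs of $I(\mu)$ correspond bijectively to those components $\mu_k > 1$, the $k$-th such run has length $\mu_k - 1$, and the breakpoints $S(\mu) \cap [n]$ are exactly $\{\mu_1, \mu_1+\mu_2, \ldots, \mu_1 + \cdots + \mu_{t-1}\}$.

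For part 1, $I(\mu) \subseteq I(\lambda)$ is equivalent to $S(\lambda) \subseteq S(\mu)$, which is the definition of $\mu$ refining $\lambda$. For the $BM_n$ clause I would compute $\fin(I(\mu))$ directly: the elements of $\{0, 1, \ldots, n\}$ missing from $I(\mu)$ are $\{0\} \cup (S(\mu) \cap [n])$, whose maximum equals $0$ if $\ell(\mu) = 1$ and $n + 1 - \mu_t$ otherwise. Hence, given the refinement condition, $\fin(I(\mu)) = \fin(I(\lambda))$ translates to $\mu_t = \lambda_s$, i.e., the last block of the refinement is trivial, meaning $\lambda_s$ is unrefined in $\mu$. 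Part 2 is then just the count $|I(\mu)| = n - (\ell(\mu) - 1) = n + 1 - \ell(\mu)$ substituted into Theorem~\ref{t.AB_entries}(2).

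For part 3 I would analyze one run at a time. A run $J_k$ of $J = I(\mu)$ is determined by a unique component $\mu_j > 1$, and satisfies $|J_k| + 1 = \mu_j$ with $m_k = \mu_1 + \cdots + \mu_{j-1}$. The decisive point is that $m_k \in I(\lambda)$ iff $m_k \notin S(\lambda) \cup \{0\}$, which happens exactly when $\mu_j$ is \emph{not} the first component of its block in the refinement. Hence $c_k(I(\lambda)) = 1$ precisely for runs whose associated $\mu_j$ equals some $\mu_{\rm init}(\lambda_i)$; and every $i$ with $\mu_{\rm init}(\lambda_i) = 1$ contributes no run but a harmless factor of $1$ to $\prod_i \mu_{\rm init}(\lambda_i)$. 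This yields the formula for $AM_n$. For $BM_n$, noting that $n \in I(\lambda)$ iff $\lambda_s > 1$, and that whenever $BM_n$'s entry is nonzero the last block is trivial so $\mu_{\rm init}(\lambda_s) = \lambda_s$, one checks that dropping the last run (the transition $t \mapsto t'$) matches excluding the last factor from $\prod_i$.

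The only real obstacle is the bookkeeping: keeping straight three parallel indexings---runs of $J$ by $k$, components of $\mu$ by $j$, and blocks of the refinement by $i$---together with the boundary conventions (the role of $0$ in $\fin$, and the treatment of trivial components $\mu_j = 1$ or $\lambda_i = 1$). No ideas beyond Theorem~\ref{t.AB_entries} and the dictionary $\mu \leftrightarrow I(\mu)$ are needed.
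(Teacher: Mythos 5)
Your proposal is correct and matches what the paper intends: the paper states Theorem~\ref{t.AB_entries_comp} immediately after Theorem~\ref{t.AB_entries} with only the remark that it is ``an alternative description... in terms of compositions, using the correspondence $\mu \longleftrightarrow I(\mu)$,'' so the translation you carry out is precisely the intended proof. The bookkeeping details you flag are exactly the ones that need care, and you handle them correctly: $I(\mu)\subseteq I(\lambda)\iff S(\lambda)\subseteq S(\mu)$ for the refinement condition, the computation $\fin(I(\mu))=n+1-\mu_t$ (with the degenerate value $0$ when $\ell(\mu)=1$ subsumed since then $\mu_t=n+1$), the count $|I(\mu)|=n+1-\ell(\mu)$ for the sign, the identification $c_k(I(\lambda))=1\iff m_k\in\{0\}\cup S(\lambda)\iff$ the associated $\mu_j$ opens a block of the refinement, and the observation that when $(BM_n)_{\lambda,\mu}\neq 0$ the excluded factor $\mu_{\rm init}(\lambda_s)=\mu_t=\lambda_s$ equals $1$ exactly when $n\notin I(\lambda)$, reconciling $t'$ with the primed product.
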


\medskip
\subsection{Diagonal entries}\ 
\medskip

The following corollary of Theorem~\ref{t.AB_entries}$(iii)$
and Theorem~\ref{t.AB_entries_comp}$(iii)$
is stated, simultaneously, in terms of a composition $\mu$ of $n+1$
and the corresponding subset $J = I(\mu)$ of $[n]$.
Different indices ($i$ and $k$) are used for the components of $\mu$ and the runs of $J$,
since runs correspond only to the components statisfying $\mu_i > 1$.

\begin{corollary}\label{t.AB_diag} {\rm (Diagonal and last row of $AM_n$)}
\begin{itemize}
\item[$(i)$]
The diagonal entries of $AM_n$ are
\[
|(AM_n)_{J,J}| = \prod_i \mu_i = \prod_k (|J_k| + 1)
\]
and the entries in its last row are
\[
|(AM_n)_{[n],J}| = \mu_1 =
\begin{cases}
|J_1| + 1, & \text{if $1 \in J$;}\\
1, & \text{otherwise.}
\end{cases}
\]
\item[$(ii)$]
Each nonzero entry $(AM_n)_{I,J}$ divides
the corresponding diagonal entry $(AM_n)_{J,J}$ and is divisible by
the corresponding last row entry $(AM_n)_{[n],J}$.
\end{itemize}
\end{corollary}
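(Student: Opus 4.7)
The plan is to deduce both parts directly from the explicit entry formula in Theorem~\ref{t.AB_entries}(3), by specializing to $I = J$ (for the diagonal) and to $I = [n]$ (for the last row), and then making a short divisibility observation. No ingredients beyond Theorems~\ref{t.AB_entries} and~\ref{t.AB_entries_comp} are needed.

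For the diagonal formula, I would specialize $I = J$ in the identity $|(AM_n)_{I,J}| = \prod_{k=1}^t (|J_k|+1)^{c_k(I)}$. The key point is that if $J_k = \{m_k+1, \ldots, m_k+\ell_k\}$ is a run of $J$, then by maximality of the run one has $m_k \notin J$ (either $m_k = 0$, or $m_k$ separates $J_{k-1}$ from $J_k$). Therefore $c_k(J) = 1$ for every $k$, and the product collapses to $\prod_k (|J_k|+1)$. Translating through the bijection $\mu \longleftrightarrow I(\mu)$, each run $J_k$ of length $\ell_k$ corresponds to a component $\mu_i = \ell_k + 1 > 1$, while components $\mu_i = 1$ contribute a trivial factor, giving $\prod_i \mu_i$.

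For the last row I would specialize $I = [n]$. Since $m_k \in [n]$ iff $m_k \ge 1$, and only the first run can have $m_1 = 0$ (precisely when $1 \in J$), each $c_k([n])$ vanishes except possibly $c_1([n])$, which equals $1$ iff $1 \in J$. Hence the product reduces to $|J_1|+1$ if $1 \in J$ and to $1$ otherwise. Under the bijection, $1 \in J$ iff $\mu_1 > 1$, and in both cases the value is $\mu_1$.

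Part (2) is then immediate. In the formula $|(AM_n)_{I,J}| = \prod_k (|J_k|+1)^{c_k(I)}$ each exponent $c_k(I)$ lies in $\{0,1\}$, so the entry is always a product of a sub-multiset of the factors $(|J_k|+1)$ appearing in $|(AM_n)_{J,J}|$, giving divisibility by the diagonal entry. For divisibility by the last-row entry, note that when $1 \in J$ one has $m_1 = 0 \notin I$ for every $I \in P_n$, forcing $c_1(I) = 1$ whenever $(AM_n)_{I,J}$ is nonzero; the factor $|J_1|+1 = |(AM_n)_{[n],J}|$ thus always appears. When $1 \notin J$, the last-row entry is $1$ and divides everything. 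There is no real obstacle here; the only care required is keeping track of the trivial components $\mu_i = 1$ when moving between the set language of Theorem~\ref{t.AB_entries} and the composition language of Theorem~\ref{t.AB_entries_comp}.
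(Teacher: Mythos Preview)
Your proposal is correct and follows exactly the approach the paper intends: the statement is presented there simply as a corollary of Theorem~\ref{t.AB_entries}(3) and Theorem~\ref{t.AB_entries_comp}(3), with no further argument given, and your specializations $I=J$ and $I=[n]$ together with the observation that each exponent $c_k(I)\in\{0,1\}$ are precisely what is needed to unpack that claim. One tiny phrasing issue: when you write ``giving divisibility by the diagonal entry'' you mean that the entry \emph{divides} the diagonal entry (not the reverse); the mathematics is right, but you may want to reword to avoid ambiguity.
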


Recall, from Definition~\ref{d.ord}, the bijection $J \longleftrightarrow \ord(J)$
between finite sets of integers $J$ and nonnegative integers.
Define
\begin{equation*}
a_{\ord(J)} := |(AM_n)_{J,J}| \qquad (\forall J \in P_n),
\end{equation*}
noting that this number depends on $J$ but not on $n$,
and consider the resulting sequence $(a_m)_{m \ge 0}$ of absolute
values of diagonal entries of the ``limit matrix''
$AM_{\infty} = \lim_{n \to \infty} AM_n$.

\begin{lemma}
The sequence $(a_m)_{m \ge 0}$ satisfies the recursive definition
\[
a_0 = 0,\ \ \ a_{2m} = a_m,\ \ \ a_{4m+1} = 2a_{2m},\ \ \ a_{4m+3}
= 2a_{2m+1} - a_m \quad(\forall m \ge 0).
\]
\end{lemma}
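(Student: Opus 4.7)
The plan is to prove the recursion by interpreting the sequence $(a_m)$ combinatorially via Corollary~\ref{t.AB_diag}(1), which states $a_{\ord(J)} = |(AM_n)_{J,J}| = \prod_k (|J_k|+1)$, where $J_1,\ldots,J_t$ are the runs of $J$. Because this product depends only on $J$, it indeed defines a sequence indexed by finite subsets of $\NN$ via $m = \ord(J)$. The reformulation I will use is that the binary expansion of $m$ encodes $J$: the $(j-1)$-st bit of $m$ equals $1$ iff $j \in J$, so the runs of $J$ correspond precisely to the maximal blocks of consecutive $1$-bits of $m$. Each recursion will then be verified by analyzing the lowest one or two bits of the index.

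For the first identity, if the index is $2m$ then its lowest bit is $0$, so $1 \notin J$; the shifted set $J' := \{j-1 : j \in J\}$ has $\ord(J') = m$ and identical run lengths, yielding $a_{2m} = a_m$. For the second, if the index is $4m+1$ then the low bits are $01$, so $1 \in J$, $2 \notin J$, and the first run of $J$ is exactly $\{1\}$, contributing a factor of $2$. The remaining runs belong to $J \setminus \{1\} \subseteq \{3,4,\ldots\}$, a set of ordinal $4m$; two applications of the first identity then give $a_{4m+1} = 2 a_m = 2 a_{2m}$.

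The third identity is the substantive step. If the index is $4m+3$ then bits $0$ and $1$ are both $1$, so $\{1,2\} \subseteq J$ and the first run of $J$ has some length $\ell+1$ with $\ell \ge 1$. Writing $p := \prod_{k \ge 2}(|J_k|+1)$ for the contribution of the later runs, a direct count shows that the three quantities $a_{\ord(J)}$, $a_{\ord(J \setminus \{1\})}$ and $a_{\ord(J \setminus \{1,2\})}$ equal $(\ell+2)p$, $(\ell+1)p$ and $\ell\, p$ respectively. Since the corresponding ordinals are $4m+3$, $4m+2$ and $4m$, the first identity converts the latter two into $a_{2m+1}$ and $a_m$, and the desired recursion reduces to the elementary identity $(\ell+2) = 2(\ell+1) - \ell$.

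The main obstacle is verifying $a_{\ord(J \setminus \{1,2\})} = \ell p$ uniformly in $\ell$. For $\ell \ge 2$ the first run shortens by two but still survives, contributing $\ell$. For $\ell = 1$ (when $J_1 = \{1,2\}$ exactly) the first run vanishes entirely, and $a_{\ord(J \setminus \{1,2\})}$ equals the product over the remaining runs alone, which is $p$; this coincides with $\ell p$ under the convention that an empty product equals $1$, unifying the two cases. One small editorial point: the stated initial value $a_0 = 0$ appears to be a misprint, since $J = \emptyset$ gives $(AM_n)_{\emptyset,\emptyset} = 1$; the recursion is consistent only with $a_0 = 1$, which then forces $a_1 = 2 a_0 = 2$, matching $|(AM_1)_{\{1\},\{1\}}| = 2$.
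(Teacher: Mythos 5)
Your proof is correct and takes essentially the same approach as the paper: both interpret $\ord(J)$ as the binary encoding of $J$, read off how the low bits affect the first run, and reduce each recursion to arithmetic on the run-length factors. The one stylistic difference is that for $a_{4m+3}$ the paper splits into cases on the parity of $m$, while you parametrize uniformly by the first run length of the set corresponding to $4m+3$, absorbing the $m$-even case into $\ell=1$; this is a mild streamlining of the same argument. You are also right that $a_0=0$ is a misprint — the empty product gives $a_0=|(AM_n)_{\emptyset,\emptyset}|=1$, which is consistent with the recursion and with the OEIS entry A106737 cited immediately afterward.
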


\begin{proof}
Consider the formula in Corollary~\ref{t.AB_diag}$(i)$ expressing a diagonal entry
of $AM_n$ in terms of the corresponding run lengths.
We shall not distinguish a set $J$ from its ordinal number $\ord(J)$.

(The set corresponding to) $2m$ has the same runs as $m$, shifted forward by $1$,
so that $a_{2m} = a_m$.
$4m+1$ has the same runs as $2m$, shifted forward by $1$,
plus a singleton run $\{1\}$, 
so that $a_{4m+1} = 2a_{2m}$.

If $m$ is even then $a_{4m+3} = 3a_m$ and $a_{2m+1} = 2a_m$,
so that $a_{4m+3} = 2a_{2m+1} - a_m$.
If $m$ is odd, let $\ell$ be the length of the first run in $m$.
The corresponding runs in $2m+1$ and in $4m+3$ have lengths $\ell+1$ and
$\ell+2$, respectively, so that again $a_{4m+3} = 2a_{2m+1} - a_m$.
\end{proof}

\begin{corollary}
The sequence $(a_m)$ coincides with \cite[sequence A106737]{Sloane}.
Thus, in particular,
\[
a_m = \sum_{k=0}^m \left[ {m+k \choose m-k} {m \choose k} \mod 2 \right],
\]
where the expression in the square brackets is interpreted as
either $0$ or $1$ and summed as an ordinary integer.
\end{corollary}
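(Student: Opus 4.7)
The plan is to introduce
\[
b_m\ :=\ \sum_{k=0}^m \bigl[\,\tbinom{m+k}{m-k}\tbinom{m}{k}\bmod 2\,\bigr]
\]
and show that $(b_m)$ satisfies the same recursion and initial value as the Lemma gives for $(a_m)$; uniqueness then forces $a_m=b_m$, and the identification with \cite[A106737]{Sloane} follows by matching the closed form (or the recursion) against the OEIS entry. (In passing, the printed $a_0=0$ in the Lemma is an evident typo for $a_0=1$; otherwise the recursion would propagate the zero sequence.)

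The translation step uses Lucas' theorem modulo~$2$: $\binom{a+b}{a}$ is odd precisely when the binary supports of $a$ and $b$ are disjoint. Applied to $\binom{k+(m-k)}{k}$ and to $\binom{(m-k)+2k}{m-k}$, this yields $b_m=|N(m)|$, where
\[
N(m)\ :=\ \bigl\{\, k\in[0,m] : k \text{ is a binary submask of } m,\text{ and $m-k$ is bit-disjoint from $2k$}\,\bigr\}.
\]
Three of the four recursion rules then drop out from direct bit manipulations. First, $N(0)=\{0\}$ gives $b_0=1$. Second, every element of $N(2m)$ is even, and the map $k\mapsto k/2$ is a bijection $N(2m)\to N(m)$ (both defining conditions rescale by a factor of $2$), so $b_{2m}=b_m$. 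Third, since the low bit of $4m+1$ is $1$ and the low bit of $2k$ is $0$, toggling bit~$0$ of $k$ preserves membership in $N(4m+1)$; this yields a $2$-to-$1$ map onto $N(2m)$, and hence $b_{4m+1}=2b_{2m}$.

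The main obstacle is the identity $b_{4m+3}=2b_{2m+1}-b_m$, whose negative sign rules out a plain bijection. The plan is to stratify $N(4m+3)$ by the low two bits of $k$ and do similarly for $N(2m+1)$. Writing $n=4m+3$: the stratum $k\equiv 1\pmod 4$ of $N(n)$ is empty (bit~$1$ of $n-k$ and bit~$1$ of $2k$ both equal~$1$); the stratum $k\equiv 0\pmod 4$ bijects with $N(m)$; and each of the strata $k\equiv 2\pmod 4$ and $k\equiv 3\pmod 4$ bijects with $E:=\{k'\in N(m):m-k' \text{ is even}\}$, the extra parity constraint arising from bit~$2$ of $2k$ equaling~$1$. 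Hence $b_{4m+3}=b_m+2|E|$. An analogous parity split of $N(2m+1)$ gives $b_{2m+1}=b_m+|E|$, so eliminating $|E|$ yields the required identity. The combinatorial heart of the corollary is the precise $2:1$ mismatch between the number of strata producing the constraint ``$m-k'$ even'' on the two sides, which generates both the factor of~$2$ and the inclusion--exclusion structure of the recursion.
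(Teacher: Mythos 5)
Your proposal is correct and, in fact, supplies a proof where the paper gives none: the Corollary is stated with no argument, relying implicitly on the OEIS entry for A106737 to carry both the four-branch recursion (which that entry lists) and the binomial-sum formula. You instead prove the identification directly, reinterpreting the parity of the binomial product via Kummer's carry criterion mod $2$ as a submask and bit-disjointness condition on $k$, $m-k$, and $2k$, and then verifying the recursion by stratifying $N(\cdot)$ according to the low-order bits of $k$. The bit manipulations all check out: the stratum $k\equiv 1\pmod 4$ of $N(4m+3)$ is empty because bit~$1$ of $(4m+3)-k$ and bit~$1$ of $2k$ are simultaneously forced to $1$; the strata $k\equiv 2,3\pmod 4$ each impose the extra constraint (from bit~$2$) that $m-k'$ be even; and the analogous parity split of $N(2m+1)$ gives $b_{2m+1}=b_m+|E|$, whence $2b_{2m+1}-b_m=b_m+2|E|=b_{4m+3}$. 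You also correctly flag the misprint $a_0=0$ in the preceding Lemma, which must read $a_0=1$ (otherwise the recursion yields the zero sequence, contradicting the explicit diagonal formula $|(AM_0)_{\emptyset,\emptyset}|=1$). The only cosmetic quibble is that the natural $2:1$ map in the $4m+1$ case is $k\mapsto\lfloor k/4\rfloor$ onto $N(m)$ rather than onto $N(2m)$; since $|N(m)|=|N(2m)|$ this does not affect the count. Overall, this self-contained derivation is a genuine improvement on an appeal to the OEIS.
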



\medskip
\subsection{Row sums and column sums}\ 
\medskip

The following two results, regarding row and column sums of $AM_n$ and $BM_n$,
are stated, for simplicity, almost entirely in the language of compositions.

\begin{lemma}\label{t.row_sums} {\rm (Row sums of $AM_n$, $BM_n$)}\\
Let $\la$ be a composition of $n+1$, and let $I = I(\la)$ be the
corresponding subset  of $[n]$.
\begin{itemize}
\item[$(i)$]
The sum of all entries in row $I$ of $AM_n$ (or $BM_n$, or $HM_n$) is $(-1)^{|I|}$.
\item[$(ii)$]
The sum of absolute values of all entries in row $I$ of $AM_n$ is
\[
\prod_i (2^{\la_i} - 1).
\]
The sum of absolute values of all entries in row $I$ of $BM_n$ is
\[
{\prod_i}' (2^{\la_i} - 1),
\]
where $\prod_i '$ is a product over all values of $i$ except the last.
In $HM_n$ the corresponding sum is $3^{|I|}$.
\end{itemize}
\end{lemma}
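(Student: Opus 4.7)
For part 1, the cleanest approach is to exploit the factorization $AM_n = A_n M_n$ (and similarly $BM_n = B_n M_n$, $HM_n = H_n M_n$), reading the row sums as $(AM_n)\mathbf{1} = A_n(M_n \mathbf{1})$. The $I$-entry of $M_n \mathbf{1}$ is $\sum_{J \supseteq I}(-1)^{|J\setminus I|}$, which vanishes for $I \neq [n]$ and equals $1$ for $I = [n]$; hence $M_n \mathbf{1} = e_{[n]}$, so each row sum is the corresponding entry of the column of $A_n$ (resp.\ $B_n$, $H_n$) indexed by $[n]$. By Lemma~\ref{t.AB_explicit} this column entry is $(-1)^{|I|}$ in all three cases: every run $I_k$ is trivially a prefix of itself $= I_k \cap [n]$, and for $B_n$ the extra restriction $n \notin I \setminus [n]$ is vacuous.

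For part 2, the $HM_n$ statement is immediate from Lemma~\ref{t.HMentries}: $\sum_{J \subseteq I} 2^{|J|} = 3^{|I|}$. For $AM_n$ I would factorize the row sum over the runs $I_1,\ldots,I_s$ of $I$. The key observation is that consecutive runs of $I$ are separated by at least one element of $[n] \setminus I$, so every run $J_k$ of a subset $J \subseteq I$ sits inside a unique $I_i$; moreover the element $m_k$ immediately preceding $J_k$ lies in $I$ iff $J_k$ does not start at the leftmost point of its host $I_i$, so the exponent $c_k(I)$ in Theorem~\ref{t.AB_entries}(3) equals $1$ precisely when $J_k$ is the initial segment of $I_i$. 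This gives the clean factorization
\[
|(AM_n)_{I,J}| = \prod_{i=1}^{s} f_i(J \cap I_i),
\]
where $f_i(K)$ is the initial-run length of $K$ plus $1$ if $K$ contains the left endpoint of $I_i$, and $1$ otherwise. The row sum then factorizes as $\prod_i \sum_{K \subseteq I_i} f_i(K)$; splitting $K$ according to whether it contains the leftmost element of $I_i$ and, if so, by the length of its initial run, the inner sum is a routine geometric manipulation yielding $2^{|I_i|+1} - 1$. Translating back to the composition, $|I_i|+1 = \la_{j_i}$ over the components with $\la_j > 1$, and components with $\la_j = 1$ contribute the harmless factor $2^1 - 1 = 1$, so the answer is $\prod_j(2^{\la_j}-1)$.

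For $BM_n$ I would split on whether $n \in I$. When $\la_t = 1$ (so $n \notin I$), both the nonzero condition and the exponent $t'$ of Theorem~\ref{t.AB_entries}(3) coincide with those of $AM_n$, so the $BM_n$ row sum equals the $AM_n$ row sum; the factor $2^{\la_t}-1 = 1$ then drops out. When $\la_t > 1$ (so $n \in I$), the condition $\fin(J) = \fin(I)$ forces the entire last run $I_s$ into $J$, so $J \cap I_s = I_s$ is fixed; the rule $t' = t - 1$ excludes the factor $|I_s|+1$ from the product, and the remaining runs factorize exactly as before, giving $\prod_{j=1}^{t-1}(2^{\la_j}-1)$. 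The main obstacle — really the only non-routine step — is recognizing the run-wise factorization of the entry formula and correctly handling the asymmetric role of the last run in the $BM_n$ case; once this factorization is in hand, everything collapses to the one-dimensional identity $\sum_{K \subseteq I_i} f_i(K) = 2^{|I_i|+1} - 1$.
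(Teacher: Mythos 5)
Your proof is correct, and it takes a genuinely different route from what the paper has in mind. The paper states that ``the proofs of most of the results in this section follow from recursion formulas (\ref{eq.AMn}) and (\ref{eq.BMn}),'' so the intended argument is a direct induction on $n$: letting $\rho_n(I)$ and $\sigma_n(I)$ denote the row sums (or absolute row sums) of $AM_n$ and $BM_n$, the block structure gives $\rho_n(I)=\rho_{n-1}(I)$ and $\sigma_n(I)=\rho_{n-1}(I)$ when $n\notin I$, and $\rho_n(I)=2\rho_{n-1}(I')+\sigma_{n-1}(I')$, $\sigma_n(I)=\sigma_{n-1}(I')$ when $n\in I$ (with $I'=I\setminus\{n\}$); the claimed product formulas satisfy exactly these recursions, since adjoining $n$ to $I$ increments $\lambda_t$ by $1$ and $2\cdot\bigl(2^{\lambda_t-1}-1\bigr)+1=2^{\lambda_t}-1$. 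You instead derive everything from the explicit entry formulas. For part 1, the observation that $M_n\mathbf{1}=e_{[n]}$, so that row sums of $AM_n$ are simply the $[n]$-column of $A_n$, is slicker than the induction and is a nice structural explanation. For part 2, your run-wise factorization of the entry $|(AM_n)_{I,J}|$ over the runs of $I$, reducing the row sum to the one-dimensional identity $\sum_{K\subseteq[m]}f(K)=2^{m+1}-1$, is correct and makes it transparent why the answer has the product form $\prod_i(2^{\lambda_i}-1)$; and your case analysis for $BM_n$ (the condition $\fin(J)=\fin(I)$ forcing $J\cap I_s=I_s$ when $n\in I$, and the $t'=t-1$ rule killing exactly that last factor) is accurate. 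The tradeoff is that your argument relies on Theorem~\ref{t.AB_entries}, which was itself established by recursion, so it is not strictly more elementary, but it is more illuminating. One minor point worth being explicit about: in the factorization step you need the fact (guaranteed by Theorem~\ref{t.AB_entries}(2)) that all nonzero entries in a given column share the sign $(-1)^{|J|}$; this is what makes the ``sum of absolute values'' factorize cleanly, and it is also needed if one takes the recursion route.
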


\begin{proof}
The sum of all entries in row $I$ of $AM_n$ is
\[
\sum_J (AM_n)_{I,J} = \sum_J (AM_n)_{I,J} (Z_n)_{J,[n]}
= (AM_n \cdot Z_n)_{I,[n]} = (A_n)_{I,[n]} = (-1)^{|I|},
\]
and similarly for $BM_n$ and $HM_n$.

In order to compute the sum of absolute values,
recall that $I = I(\la)$ for a composition $\la = (\la_1, \ldots, \la_t)$ of $n+1$,
and apply Theorem~\ref{t.AB_entries_comp}:
\begin{eqnarray*}
\sum_{\mu} |(AM_n)_{\la,\mu}|
&=& \sum_{\mu:\, \mu \text{ refines } \la} \prod_i \mu_{\rm init}(\la_i) 
= \prod_i \sum_{\mu^{(i)}:\, \mu^{(i)} \text{ refines } \la_i} \mu^{(i)}_{\rm init}(\la_i) \\
&=& \prod_i \sum_{\mu^{(i)}:\, \mu^{(i)} \text{ refines } \la_i} \mu^{(i)}_1 
= \prod_i \, \sum_{k=1}^{\la_i}
\sum_{\mu^{(i)}:\, \mu^{(i)} \text{ refines } \la_i \atop \mu^{(i)}_1 = k} k \\
&=& \prod_i \left( \la_i + \sum_{k=1}^{\la_i - 1} 2^{\la_i - k - 1} k \right) 
= \prod_i \left( 2^{\la_i} - 1 \right),
\end{eqnarray*}
where the last equality follows from the elementary identity
\[
\sum_{k=1}^{m - 1} 2^{m - k - 1} k =  2^m - m - 1 \qquad (m \ge 1).
\]
For $BM_n$ each product $\prod_i$ should be replaced by $\prod_i '$,
and for $HM_n$ one has
\[
\sum_J |(HM_n)_{I,J}|
= \sum_{J:\, J \subseteq I} 2^{|J|}
= \sum_{j = 0}^{|I|} {|I| \choose j} 2^{j}
= 3^{|I|}.
\]
\end{proof}

\begin{theorem}\label{t.col_sums}
{\rm (Column sums of $AM_n$, $BM_n$ and diagonal entries of $A_n^2$, $B_n^2$)}\\
Let $\mu$ be a composition of $n+1$,
and let $J = I(\mu)$ be the corresponding subset  of $[n]$.
Let $\mu^*$ be the composition of $n$ obtained from $\mu$ by reducing its first component by $1$,
without changing the other components: $\mu_1^* = \mu_1 - 1$, $\mu_i^* = \mu_i$ $(\forall i > 1)$.
\begin{itemize}
\item[$(i)$]
The sum of absolute values (also: absolute value of the sum)
of all the entries in column $J$ of $AM_n$ is equal to
the diagonal entry $(A_n^2)_{J,J}$, which in turn is equal to
\[
\prod_i (\mu_i^* + 1).
\]
\item[$(ii)$]
The sum of absolute values (also: absolute value of the sum)
of all the entries in column $J$ of $BM_n$ is equal to
the diagonal entry $(B_n^2)_{J,J}$, which in turn is equal to
\[
{\prod_i}' (\mu_i^* + 1),
\]
where $\prod_i '$ is a product over all values of $i$ except the last.
\item[$(iii)$]
For comparison, the sum of absolute values
of all the entries in column $J$ of $HM_n$ is equal to
the diagonal entry $(H_n^2)_{J,J}$, which in turn is equal to the constant $2^n$.
\end{itemize}
\end{theorem}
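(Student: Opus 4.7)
The plan is a joint induction on $n$, exploiting the block recursions (\ref{eq.AMn}), (\ref{eq.BMn}) for $AM_n$, $BM_n$ and the recursive block forms of $A_n$, $B_n$ from Definition~\ref{d.AB_recursion}. As a preliminary, Theorem~\ref{t.AB_entries}(2) says that every nonzero entry in column $J$ of $AM_n$ (resp.\ $BM_n$, $HM_n$) carries the common sign $(-1)^{|J|}$, so the absolute value of the column sum equals the sum of absolute values; this reconciles the two expressions in the statement.

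Introduce the column sums $c_n(J) := \sum_I (AM_n)_{I,J}$, $c_n^B(J) := \sum_I (BM_n)_{I,J}$ and the diagonal entries $d_n(J) := (A_n^2)_{J,J}$, $d_n^B(J) := (B_n^2)_{J,J}$. Reading off (\ref{eq.AMn}) and (\ref{eq.BMn}) yields, with $J' := J \setminus \{n\}$, the relations $c_n(J) = 2 c_{n-1}(J)$, $c_n^B(J) = c_{n-1}(J)$ when $n \notin J$, and $c_n(J) = -c_{n-1}(J') - c_{n-1}^B(J')$, $c_n^B(J) = -c_{n-1}^B(J')$ when $n \in J$. Squaring the block forms of $A_n$, $B_n$ produces the parallel recursions $d_n(J) = 2 d_{n-1}(J)$, $d_n^B(J) = d_{n-1}(J)$ when $n \notin J$, and $d_n(J) = d_{n-1}(J') + d_{n-1}^B(J')$, $d_n^B(J) = d_{n-1}^B(J')$ when $n \in J$. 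Since inductively $c_{n-1}(J')$ and $c_{n-1}^B(J')$ carry the common sign $(-1)^{|J'|}$, taking absolute values converts the $c$-recursions into the $d$-recursions. With the shared base value $1$ at $n = 0$, induction yields $|c_n(J)| = d_n(J)$ and $|c_n^B(J)| = d_n^B(J)$ for all $n$ and $J$.

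To identify the closed form, write $\mu = (\mu_1, \ldots, \mu_s)$ for the composition of $n+1$ with $J = I(\mu)$. The case $n \notin J$ is equivalent to $\mu_s = 1$, in which case $\tilde\mu := (\mu_1, \ldots, \mu_{s-1})$ is the composition of $n$ corresponding to $J \in P_{n-1}$; the case $n \in J$ is equivalent to $\mu_s \geq 2$, in which case $\tilde\mu' := (\mu_1, \ldots, \mu_{s-1}, \mu_s - 1)$ is the composition of $n$ corresponding to $J' \in P_{n-1}$. Setting $P(\mu) := \prod_i(\mu_i^*+1) = \mu_1 \prod_{i=2}^s (\mu_i+1)$ and $P'(\mu) := \prod_i'(\mu_i^*+1) = \mu_1 \prod_{i=2}^{s-1}(\mu_i+1)$, short arithmetic gives $P(\mu) = 2P(\tilde\mu)$, $P'(\mu) = P(\tilde\mu)$ when $\mu_s = 1$, and $P(\mu) = P(\tilde\mu') + P'(\tilde\mu')$, $P'(\mu) = P'(\tilde\mu')$ when $\mu_s \geq 2$ -- exactly the $d_n$, $d_n^B$ recursions. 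With the base case $\mu = (1)$ giving $P = P' = 1$, the induction closes, proving parts~1 and~2. Part~3 is immediate from Lemma~\ref{t.HMentries} (column sum $(-2)^{|J|} \cdot 2^{n-|J|} = (-1)^{|J|} 2^n$) together with the Hadamard identity $H_n^2 = 2^n I$ for the symmetric matrix $H_n$.

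The main bookkeeping step is the translation between subsets $J \in P_n$ and compositions $\mu$ of $n+1$, and the verification of the four small arithmetic identities for $P(\mu)$, $P'(\mu)$ in the two cases; everything else is routine inspection of $2 \times 2$ block matrices.
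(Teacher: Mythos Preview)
Your argument is correct and follows essentially the same route as the paper: derive matching recursions for the column sums of $AM_n$, $BM_n$ and for the diagonal entries of $A_n^2$, $B_n^2$ from the $2\times 2$ block forms, then check that the claimed product formulas $P(\mu)$, $P'(\mu)$ satisfy the same recursions and base case. Your explicit verification of the four arithmetic identities for $P$ and $P'$ (according to whether $\mu_s=1$ or $\mu_s\ge 2$) is exactly what the paper sketches in words, and your invocation of the common sign $(-1)^{|J|}$ to equate the signed column sum with the sum of absolute values is the same observation the paper relies on implicitly.
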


\begin{proof}
The recursions for $A_n^2$ and $B_n^2$ are
\[
A_n^2 = \left(\begin{array}{cc}
2 A_{n-1}^2 & A_{n-1}(A_{n-1} - B_{n-1}) \\
(A_{n-1} - B_{n-1})A_{n-1} &  A_{n-1}^2 +  B_{n-1}^2
\end{array}\right)
\qquad(n \ge 1)
\]
and
\[
B_n^2 = \left(\begin{array}{cc}
A_{n-1}^2 & A_{n-1}(A_{n-1} - B_{n-1}) \\
0 & B_{n-1}^2
\end{array}\right)
\qquad(n \ge 1),
\]
with $A_0^2 = B_0^2 = (1)$. Denoting
$\alpha_n(J) := (A_n^2)_{J,J}$, $\beta_n(J) := (B_n^2)_{J,J}$ and
$J' := J \setminus \{n\}$, it follows that
\begin{equation}\label{eq.alpha}
\alpha_n(J) =
\begin{cases}
2 \alpha_{n-1}(J'), & \text{if $n \not\in J$;} \\
\alpha_{n-1}(J') + \beta_{n-1}(J'), & \text{otherwise}
\end{cases}
\end{equation}
and
\begin{equation}\label{eq.beta}
\beta_n(J) =
\begin{cases}
\alpha_{n-1}(J'), & \text{if $n \not\in J$;} \\
\beta_{n-1}(J'), & \text{otherwise,}
\end{cases}
\end{equation}
with $\alpha_0(\emptyset) = \beta_0(\emptyset) = 1$.

A short look at recursions (\ref{eq.AMn}) and (\ref{eq.BMn})
(together with Theorem~\ref{t.AB_entries}$(ii)$) shows that
recursions (\ref{eq.alpha}) and (\ref{eq.beta}) also hold if
$\alpha_n(J)$ and $\beta_n(J)$ denote the sum of absolute values of
all the entries in column $J$ of $AM_n$ and of $BM_n$, respectively.
These recursions also hold if $\alpha_n(J)$ and $\beta_n(J)$ stand for
the explicit product formulas in the theorem,
since if $J = I(\mu)$ and $J' = J \setminus \{n\} = I(\mu')$ then
$n \not\in J$ means that $\mu$ is obtained from $\mu'$ by appending a new component of size $1$,
while $n \in J$ means that $\mu$ is obtained from $\mu'$ by increasing the last component by $1$.
\end{proof}

Comparing Theorem~\ref{t.col_sums} with Theorem~\ref{t.GilAlon} gives a surprising conclusion.

\begin{corollary}\label{t.ev_eq_diag}
The multiset of eigenvalues, counted by algebraic multiplicity, of $A_n^2$ (or $B_n^2$)
is equal to the multiset of diagonal entries of this matrix.
\end{corollary}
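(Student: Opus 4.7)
The plan is to compare the two multisets directly, using the explicit product formulas provided by the two theorems just cited. By Theorem~\ref{t.GilAlon}(1), each composition $\nu = (\nu_1,\ldots,\nu_s)$ of $n$ contributes the eigenvalue pair $\pm\sqrt{\pi_\nu}$ to the spectrum of $A_n$; squaring, the spectrum of $A_n^2$ (with algebraic multiplicities) is the multiset in which each $\pi_\nu$ appears with multiplicity $2$. On the other hand, by Theorem~\ref{t.col_sums}(1), the multiset of diagonal entries of $A_n^2$ is $\{\prod_i (\mu_i^* + 1) : \mu \text{ a composition of } n+1\}$, where $\mu^* = (\mu_1-1,\mu_2,\ldots,\mu_t)$. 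Both multisets have cardinality $2^n$, as expected.

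The heart of the argument is then a simple $2$-to-$1$ map $\varphi$ from compositions of $n+1$ to compositions of $n$ that preserves the relevant product. Given $\mu = (\mu_1,\ldots,\mu_t)$ a composition of $n+1$, set
\[
\varphi(\mu) := \begin{cases} (\mu_1-1,\mu_2,\ldots,\mu_t), & \mu_1 \ge 2;\\ (\mu_2,\ldots,\mu_t), & \mu_1 = 1. \end{cases}
\]
A direct inspection (using $(0+1) = 1$ in the second case) shows that in both cases $\prod_i(\mu_i^*+1) = \pi_{\varphi(\mu)}$. Conversely, given any composition $\nu = (\nu_1,\ldots,\nu_s)$ of $n$, its fiber under $\varphi$ consists of exactly the two compositions $(\nu_1+1,\nu_2,\ldots,\nu_s)$ and $(1,\nu_1,\ldots,\nu_s)$ of $n+1$. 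Hence each value $\pi_\nu$ occurs exactly twice among the diagonal entries, matching its multiplicity in the spectrum; if two distinct $\nu,\nu'$ happen to yield the same $\pi$-value, both sides simply count the contribution $2\cdot|\{\nu'' : \pi_{\nu''}=\pi_\nu\}|$ consistently.

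The $B_n^2$ case is entirely parallel. One replaces $\pi_\mu$ by $\pi'_\mu = \prod_{i=1}^{t-1}(\mu_i+1)$ from Theorem~\ref{t.GilAlon}(2) and the full product by the primed product $\prod_i'$ (omitting the last factor) from Theorem~\ref{t.col_sums}(2). The same map $\varphi$ does the job: in both cases $\mu_1\ge 2$ and $\mu_1=1$, one checks that $\prod_i'(\mu_i^*+1) = \pi'_{\varphi(\mu)}$, and the same $2$-to-$1$ fiber description applies.

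There is no deep obstacle here; the only point requiring genuine care is the treatment of the boundary case $\mu_1=1$, where the $\mu^*$-vector has a zero entry that does not correspond to a valid component of a composition. Handling that as a separate case of $\varphi$ (which simply drops the leading $1$) is what makes the map exactly $2$-to-$1$ and the product values line up.
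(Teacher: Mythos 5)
Your proof is correct, and it is precisely the comparison the paper has in mind: the paper derives the corollary by simply juxtaposing Theorem~\ref{t.GilAlon} (squared eigenvalues $\pi_\nu$, each of multiplicity two, indexed by compositions $\nu$ of $n$) with Theorem~\ref{t.col_sums} (diagonal entries $\prod_i(\mu_i^*+1)$, indexed by compositions $\mu$ of $n+1$), leaving the reader to observe the $2$-to-$1$ match. Your explicit map $\varphi$, with the careful handling of the $\mu_1=1$ boundary case for both the $A_n$ and $B_n$ (primed-product) versions, supplies exactly the routine bookkeeping the paper omits.
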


This is remarkable since, apparently, for $n \ge 3$ the matrices $A_n^2$ are
not even diagonalizable!

\medskip
\subsection{Inverse matrix entries}\ 
\medskip

We would like to have explicit expressions for the entries of $A_n^{-1}$,
for use in Section~\ref{section:applications}.
This turns out to be difficult to do directly, and we shall compute, as an intermediate step,
the entries of $AM_n^{-1}$. Note that $A_n^{-1} = M_n \cdot AM_n^{-1}$.

\begin{example}
\[
A_3^{-1} = \left(\begin{array}{cccccccc}
1/24 & 1/24 & 1/12 & 1/12 & 1/8 & 1/8 & 1/4 & 1/4 \\
1/8 & -1/24 & 1/12 & -1/12 & 5/24 & -1/8 & 1/12 & -1/4 \\
5/24 & 5/24 & -1/12 & -1/12 & 1/8 & 1/8 & -1/4 & -1/4 \\
1/8 & -5/24 & -1/12 & 1/12 & 1/24 & -1/8 & -1/12 & 1/4 \\
1/8 & 1/8 & 1/4 & 1/4 & -1/8 & -1/8 & -1/4 & -1/4 \\
5/24 & -1/8 & 1/12 & -1/4 & -5/24 & 1/8 & -1/12 & 1/4 \\
1/8 & 1/8 & -1/4 & -1/4 & -1/8 & -1/8 & 1/4 & 1/4 \\
1/24 & -1/8 & -1/12 & 1/4 & -1/24 & 1/8 & 1/12 & -1/4
\end{array}\right)
\]
\[
AM_3^{-1} = \left(\begin{array}{cccccccc}
1 & 0 & 0 & 0 & 0 & 0 & 0 & 0 \\
1/2 & -1/2 & 0 & 0 & 0 & 0 & 0 & 0 \\
1/2 & 0 & -1/2 & 0 & 0 & 0 & 0 & 0 \\
1/6 & -1/3 & -1/6 & 1/3 & 0 & 0 & 0 & 0 \\
1/2 & 0 & 0 & 0 & -1/2 & 0 & 0 & 0 \\
1/4 & -1/4 & 0 & 0 & -1/4 & 1/4 & 0 & 0 \\
1/6 & 0 & -1/3 & 0 & -1/6 & 0 & 1/3 & 0 \\
1/24 & -1/8 & -1/12 & 1/4 & -1/24 & 1/8 & 1/12 & -1/4
\end{array}\right)
\]
\end{example}

We shall attempt an inductive computation of $AM_n^{-1}$. The
recursion formulas~(\ref{eq.AMn}) and~(\ref{eq.BMn}) yield
corresponding recursions for  the inverse matrices:
\[
AM_n^{-1} = \left(\begin{array}{cc}
AM_{n-1}^{-1} & 0 \\
(AM_{n-1} + BM_{n-1})^{-1} & -(AM_{n-1} + BM_{n-1})^{-1}
\end{array}\right)
\qquad(n \ge 1)
\]
and
\[
BM_n^{-1} = \left(\begin{array}{cc}
AM_{n-1}^{-1} & 0 \\
0 & - BM_{n-1}^{-1}
\end{array}\right)
\qquad(n \ge 1),
\]
with $AM_0^{-1} = BM_0^{-1} = (1)$;
however, the recursion for $AM_n^{-1}$ involves the inverse of a new matrix, $AM_{n-1} + BM_{n-1}$,
which in turn involves the inverse of $AM_{n-2} + 2 BM_{n-2}$, and so forth.
We are thus led to consider a more general situation.
\begin{defn}\label{def.Mnx}
For any real number $x$ let
\[
AM_n(x) := x AM_n + (1-x) BM_n.
\]
\end{defn}

In particular, $AM_n(0) = BM_n$ and $AM_n(1) = AM_n$.

\begin{theorem}
For each $n \ge 0$ and $x > 0$,
\[
AM_n^{-1}(x)_{I,J} \ne 0 \iff J \subseteq I
\]
and, for $J \subseteq I$,
\[
AM_n^{-1}(x)_{I,J} = (-1)^{|J|} \prod_{i \in I} \frac{d_{I, J, x}(i)}{e_{I, J, x}(i)},
\]
where $I_1, \ldots, I_t$ are the runs of $I$ and, for $i \in I_k$:
\begin{itemize}
\item[$(i)$]
If $n \not\in I_k$ then
\[
d_{I, J, x}(i) :=
\begin{cases}
\max(I_k) - i + 1, & \text{if $i \in J$;}\\
1, & \text{otherwise}
\end{cases}
\]
and
\[
e_{I, J, x}(i) := \max(I_k) - i + 2.
\]
\item[$(ii)$]
If $n \in I_k$ (and thus necessarily $k = t$) then
\[
d_{I, J, x}(i) :=
\begin{cases}
(\max(I_k) - i) \cdot x + 1, & \text{if $i \in J$;}\\
x, & \text{otherwise}
\end{cases}
\]
and
\[
e_{I, J, x}(i) := (\max(I_k) - i + 1) \cdot x + 1.
\]
\end{itemize}
\end{theorem}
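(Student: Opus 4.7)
The plan is to induct on $n$, exploiting the block triangular form of $M_n(x)$. Plugging the recursions~(\ref{eq.AMn}) and~(\ref{eq.BMn}) into $M_n(x) = x\,AM_n + (1-x)\,BM_n$ yields
\[
M_n(x) = \left(\begin{array}{cc} AM_{n-1} & 0 \\ x\,AM_{n-1} & -(x\,AM_{n-1} + BM_{n-1}) \end{array}\right),
\]
and the crucial observation is that $x\,AM_{n-1} + BM_{n-1} = (1+x)\,M_{n-1}(y)$ where $y := x/(1+x)$. Inverting this block triangular matrix gives
\[
M_n^{-1}(x) = \left(\begin{array}{cc} AM_{n-1}^{-1} & 0 \\[2pt] \tfrac{x}{1+x}\,M_{n-1}^{-1}(y) & -\tfrac{1}{1+x}\,M_{n-1}^{-1}(y) \end{array}\right).
\]
Since $AM_{n-1}^{-1} = M_{n-1}^{-1}(1)$, every block lies in the same one-parameter family $\{M_m^{-1}(y) : y > 0\}$, making the induction self-contained. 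The support claim $J \subseteq I$ follows at once from this recursion by induction.

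I would then verify the explicit formula by splitting into four cases according to whether $n \in I$ and whether $n \in J$. The case $n \notin I$, $n \in J$ is trivial. When $n \notin I$ and $n \notin J$, the recursion gives $M_n^{-1}(x)_{I,J} = M_{n-1}^{-1}(1)_{I,J}$; in the formula for $M_n^{-1}(x)$ every run of $I$ is in case~(i), while in the formula for $M_{n-1}^{-1}(1)$ the last run of $I$ may fall into case~(ii). However, case~(ii) at $x = 1$ coincides with case~(i) (both give $d = \max(I_k)-i+1$ or $1$ and $e = \max(I_k)-i+2$), so the two expressions agree. When $n \in I$, the relevant block is $\pm(1+x)^{-1}\,M_{n-1}^{-1}(y)$ acting on $I' := I\setminus\{n\}$ and $J$ or $J\setminus\{n\}$, and I would sub-divide by whether $n-1 \in I$. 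If $n-1 \notin I$ (so the last run of $I$ is $\{n\}$ and disappears in $I'$), the isolated factor at $i = n$ in the claimed product is $\tfrac{x}{x+1}$ or $\tfrac{1}{x+1}$ (depending on whether $n \in J$), which exactly absorbs the scalar prefactor from the inverse recursion.

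The interesting sub-case is $n, n-1 \in I$, where the last run of $I$ merely shortens by one upon removing $n$. Here the key algebraic identity
\[
k y + 1 = \frac{(k+1)x + 1}{1+x} \qquad\left(y = \tfrac{x}{1+x}\right),
\]
applied with $k = n-i$ or $k = n-1-i$, shows that each factor $d(i)$ or $e(i)$ in case~(ii) with top $n-1$ and parameter $y$ equals the corresponding factor with top $n$ and parameter $x$, divided by $1+x$. The common $1/(1+x)$ cancels in the ratio $d(i)/e(i)$, so for every $i < n$ in the last run the contributions match perfectly; the $i = n$ factor then absorbs the scalar prefactor as before. The main obstacle is purely organizational: tracking how the run structure of $I$ evolves under $I \mapsto I \setminus \{n\}$, and keeping straight which parameter ($x$ vs.\ $y$) and which ``top'' ($n$ vs.\ $n-1$) governs each factor at each step of the induction. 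Once the two reductions above are verified, the four cases combine mechanically to close the induction.
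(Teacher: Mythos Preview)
Your proposal is correct and follows essentially the same route as the paper: derive the block-triangular recursion $M_n(x)=\bigl(\begin{smallmatrix}M_{n-1}(1)&0\\ xM_{n-1}(1)&-(1+x)M_{n-1}(x/(1+x))\end{smallmatrix}\bigr)$, invert it, and then verify the formula by induction, splitting on whether $n\in I$ and (when $n\in I$) on whether $n-1\in I$, using exactly the identity $(n-1-i)x'+1=\bigl((n-i)x+1\bigr)/(1+x)$ that the paper writes out. Your explicit remark that case~(ii) at $x=1$ coincides with case~(i), which is what makes the $n\notin I$ step go through, is a point the paper leaves implicit.
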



\begin{proof}
Let $x > 0$. By Definition~\ref{def.Mnx} and recursion formulas~(\ref{eq.AMn}) and~(\ref{eq.BMn}),
\begin{align*}
AM_n(x)
&= \left(\begin{array}{cc}
AM_{n-1} & 0 \\
x AM_{n-1} & -(x AM_{n-1} + BM_{n-1})
\end{array}\right) \\
&= \left(\begin{array}{cc}
AM_{n-1}(1) & 0 \\
x AM_{n-1}(1) & -(1+x) AM_{n-1}\left(\frac{x}{1+x}\right)
\end{array}\right)
\qquad(n \ge 1)
\end{align*}
with $AM_0(x) = (1)$.
Invertibility of $AM_{n-1}(x)$ for all $x > 0$ clearly implies the
invertibility of $AM_{n}(x)$ for all $x > 0$.
The inverse satisfies
\begin{equation}\label{e.Mnx_inverse_rec}
AM_n^{-1}(x) = \left(\begin{array}{cc}
AM_{n-1}^{-1}(1) & 0 \\
\frac{x}{1+x} AM_{n-1}^{-1}\left(\frac{x}{1+x}\right) & \frac{-1}{1+x} AM_{n-1}^{-1}\left(\frac{x}{1+x}\right)
\end{array}\right)
\qquad(n \ge 1)
\end{equation}
with $AM_0^{-1}(x) = (1)$, for all $x > 0$.

Recursion~(\ref{e.Mnx_inverse_rec}) shows that, indeed, for $x > 0$:
$AM_n^{-1}(x)_{I,J} \ne 0 \iff J \subseteq I$,
and that the sign of this entry is $(-1)^{|J|}$.

Regarding the absolute value of this entry (for $J \subseteq I$),
assume by induction that the prescribed formula holds for $AM_{n-1}^{-1}(x)$, $\forall x > 0$.

If $n \not\in I$ then also $n \not\in J$, and
clearly $AM_{n}^{-1}(x)_{I,J} = AM_{n-1}^{-1}(1)_{I,J}$ satisfies the required formula.

If $n \in I$, let $I' := I \setminus \{n\}$, $J' := J \setminus \{n\}$ and $x' := \frac{x}{1+x}$.
The assumed formula for $AM_{n-1}^{-1}(x')_{I',J'}$ and
the claimed formula for $AM_{n}^{-1}(x)_{I,J}$ have exactly the same factors for all $i \not\in I_t$,
so we need only consider $i \in I_t$.

If $|I_t| = 1$ (i.e., $n-1 \not\in I$) then there is nothing else in $AM_{n-1}^{-1}(x')_{I',J'}$,
but according to~(\ref{e.Mnx_inverse_rec}) there is an extra factor
$\frac{1}{1+x}$ or $\frac{x}{1+x}$ in $AM_{n}^{-1}(x)_{I,J}$ (depending on whether or not $n \in J$),
and this is exactly the missing $d_{I, J, x}(n)/e_{I, J, x}(n)$.

Finally, assume that $|I_t| > 1$. Again, the extra factor $\frac{1}{1+x}$ or $\frac{x}{1+x}$
is exactly $d_{I, J, x}(n)/e_{I, J, x}(n)$. The other factors in $AM_{n-1}^{-1}(x')_{I',J'}$,
corresponding to $i \in I'_t$, are (if $i \in J$)
\[
\frac{d_{I', J', x'}(i)}{e_{I', J', x'}(i)} =
\frac{(n-1-i)x'+1}{(n-i)x'+1} =
\frac{(n-1-i)x+1+x}{(n-i)x+1+x}=
\frac{d_{I, J, x}(i)}{e_{I, J, x}(i)}
\]
or (if $i \not\in J$)
\[
\frac{d_{I', J', x'}(i)}{e_{I', J', x'}(i)} =
\frac{x'}{(n-i)x'+1} =
\frac{x}{(n-i)x+1+x}=
\frac{d_{I, J, x}(i)}{e_{I, J, x}(i)},
\]
exactly as claimed for $AM_{n}^{-1}(x)_{I,J}$.
\end{proof}

We are especially interested, of course, in the special case $x = 1$.
\begin{corollary}{\rm (Entries of $AM_n^{-1}$)}\label{t.AM_inverse}\\
For each $n \ge 0$
\[
(AM_n^{-1})_{I,J} \ne 0 \iff J\subseteq I
\]
and, for $J\subseteq I$,
\[
(AM_n^{-1})_{I,J} = (-1)^{|J|} \prod_{i \in I} \frac{d_{I, J}(i)}{e_{I, J}(i)},
\]
where $I_1, \ldots, I_t$ are the runs of $I$ and, for $i \in I_k$:
\[
d_{I, J}(i) :=
\begin{cases}
\max(I_k) - i + 1, & \text{if $i \in J$;}\\
1, & \text{otherwise}
\end{cases}
\]
and
\[
e_{I, J}(i) := \max(I_k) - i + 2.
\]
Equivalently, for $J\subseteq I$,
\[
(AM_n^{-1})_{I,J} = (-1)^{|J|} \prod_{k=1}^{t} \frac{1}{(|I_k|+1)!} \prod_{i \in I_k \cap J} (\max(I_k) - i + 1).
\]
\end{corollary}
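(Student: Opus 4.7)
The plan is to derive the corollary as the specialization $x=1$ of the preceding theorem, with a small extra calculation to recast the product in closed form.

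First, by Definition~\ref{def.Mnx} we have $M_n(1) = AM_n$, so plugging $x=1$ into the theorem's formula gives the zero pattern, the sign $(-1)^{|J|}$, and the product expression for $(AM_n^{-1})_{I,J}$. A point worth emphasizing is that the two cases in the theorem ($n \in I_k$ versus $n \not\in I_k$) collapse into a single formula when $x=1$: in case $(ii)$ the expressions $(\max(I_k)-i)\cdot 1 + 1$ and $(\max(I_k)-i+1)\cdot 1 + 1$ coincide with the corresponding expressions $\max(I_k)-i+1$ and $\max(I_k)-i+2$ of case $(i)$, and similarly the off-diagonal value $x = 1$ matches the off-diagonal value $1$. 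Therefore the stated uniform definition of $d_{I,J}(i)$ and $e_{I,J}(i)$ is valid for every run $I_k$ irrespective of whether it contains $n$.

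Next, to obtain the equivalent factorial form, I would simplify the denominator one run at a time. Fix a run $I_k = \{m_k+1, m_k+2, \ldots, m_k + \ell_k\}$, so that $\max(I_k) = m_k + \ell_k$ and $|I_k| = \ell_k$. As $i$ ranges over $I_k$, the quantity $e_{I,J}(i) = \max(I_k) - i + 2$ takes the values $\ell_k + 1, \ell_k, \ldots, 2$, whose product is $(\ell_k + 1)! = (|I_k|+1)!$. Meanwhile $d_{I,J}(i) = 1$ whenever $i \not\in J$, and $d_{I,J}(i) = \max(I_k) - i + 1$ for $i \in I_k \cap J$. Multiplying over all runs yields
\[
\prod_{i \in I} \frac{d_{I,J}(i)}{e_{I,J}(i)} = \prod_{k=1}^{t} \frac{1}{(|I_k|+1)!} \prod_{i \in I_k \cap J} (\max(I_k) - i + 1),
\]
which is exactly the second asserted expression.

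There is no real obstacle here — the corollary is a direct specialization plus a bookkeeping rewrite. The only thing to be careful about is verifying that the last-run formulas of case $(ii)$ of the theorem genuinely reduce to those of case $(i)$ at $x=1$, so that a single unified formula suffices to cover both situations.
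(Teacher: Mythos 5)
Your proof is correct and takes exactly the paper's approach: the paper obtains this corollary simply as the $x=1$ specialization of the preceding theorem, and your verification that the two cases $(i)$ and $(ii)$ coincide at $x=1$, together with the run-by-run computation $\prod_{i \in I_k} e_{I,J}(i) = (|I_k|+1)!$, supplies the bookkeeping that the paper leaves implicit.
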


Note that the denominator $\prod_{k=1}^{t} (|I_k| + 1)!$
is the cardinality of the parabolic subgroup $\langle I \rangle$ of $S_{n+1}$
generated by the simple reflections $\{s_i\,:\, i \in I\}$.

%
%

\begin{corollary}\label{t.rows_of_inverse}\
\begin{itemize}
\item[$(i)$]
Each nonzero entry of $AM_n^{-1}$ is the inverse of an integer.
\item[$(ii)$]
In each row of $AM_n^{-1}$, the sum of absolute values of all the entries is $1$.
\item[$(iii)$]
In each row $I$ of $AM_n^{-1}$,
the first entry
\[
(AM_n^{-1})_{I, \emptyset} = \prod_{k=1}^{t} \frac{1}{(|I_k|+1)!}
\]
divides all the other nonzero entries and the diagonal entry
\[
(AM_n^{-1})_{I, I} = (-1)^{|I|} \prod_{k=1}^{t} \frac{1}{|I_k|+1}
\]
is divisible by all the other nonzero entries, where
a rational number $r$ is said to {\em divide} a rational number $s$ if the quotient $s/r$ is an integer.
\end{itemize}
\end{corollary}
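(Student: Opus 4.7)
The plan is to derive all three assertions as direct consequences of the explicit product formula in Corollary~\ref{t.AM_inverse}. The key observation is that, for each run $I_k = \{m_k+1,\ldots,m_k+\ell_k\}$, the assignment $i \mapsto \max(I_k)-i+1$ is a bijection from $I_k$ onto $\{1,2,\ldots,\ell_k\}$. Writing
\[
N_k(J) := \prod_{i \in I_k \cap J}(\max(I_k)-i+1),
\]
this bijection identifies the factors of $N_k(J)$ with the elements of some subset of the distinct integers $\{1,\ldots,\ell_k\}$.

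For part $(i)$, I factor the absolute value of a nonzero entry as
\[
|(AM_n^{-1})_{I,J}| = \prod_{k=1}^{t} \frac{N_k(J)}{(\ell_k+1)!}.
\]
Since $N_k(J)$ is a product of distinct elements of $\{1,\ldots,\ell_k\}$, it divides $\ell_k!$ and hence $(\ell_k+1)!$, so each run contributes the reciprocal of a positive integer and so does the full product.

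For part $(ii)$, I sum $|(AM_n^{-1})_{I,J}|$ over $J \subseteq I$. Because such a $J$ is uniquely determined by its intersections $J \cap I_k$ with the runs of $I$, the sum factors as a product over $k$:
\[
\sum_{J \subseteq I} |(AM_n^{-1})_{I,J}| = \prod_{k=1}^{t} \frac{1}{(\ell_k+1)!}\sum_{S \subseteq I_k}\prod_{i \in S}(\max(I_k)-i+1).
\]
The inner sum expands by distributivity to $\prod_{i \in I_k}\bigl(1+(\max(I_k)-i+1)\bigr)$, which via the bijection above equals $\prod_{j=1}^{\ell_k}(1+j) = (\ell_k+1)!$. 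This cancels the denominator, so each $k$-factor equals $1$ and the total row sum is $1$.

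For part $(iii)$, plugging $J = \emptyset$ gives $N_k(\emptyset)=1$, recovering the stated formula for $(AM_n^{-1})_{I,\emptyset}$, while $J = I$ gives $N_k(I) = \ell_k!$, recovering the formula for $(AM_n^{-1})_{I,I}$. The two divisibility claims then reduce to showing that
\[
\frac{|(AM_n^{-1})_{I,J}|}{|(AM_n^{-1})_{I,\emptyset}|} = \prod_{k=1}^{t} N_k(J), \qquad \frac{|(AM_n^{-1})_{I,I}|}{|(AM_n^{-1})_{I,J}|} = \prod_{k=1}^{t} \frac{\ell_k!}{N_k(J)}
\]
are integers; the first is a product of positive integers, and the second is an integer by the same divisibility argument used in $(i)$. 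The main obstacle, such as it is, is purely cosmetic: keeping the per-run index $i$ aligned with the per-run re-parameterization $j = \max(I_k)-i+1$ so that the factorizations above are visibly legitimate.
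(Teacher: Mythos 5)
Your proof is correct and takes exactly the route the paper intends: the paper states this corollary with no proof, treating it as a direct consequence of the explicit product formula in Corollary~\ref{t.AM_inverse}, and your argument fills in the natural details. The key bijection $i \mapsto \max(I_k)-i+1$ from $I_k$ to $\{1,\ldots,\ell_k\}$, the per-run factorization, and the distributive identity $\sum_{S\subseteq I_k}\prod_{i\in S}(\max(I_k)-i+1)=\prod_{j=1}^{\ell_k}(1+j)=(\ell_k+1)!$ are all sound, and the divisibility claims (including sign, under the paper's definition of divisibility for rationals) check out.
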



\section{Fine sets revisited}\label{section:applications}

\medskip
\subsection{Fine sets and matrices}\label{subsection:fine_sets_matrices}\ 
\medskip

The mapping $\mu \mapsto I(\mu)$ (see
Subsection~\ref{subsection:prelim_compositions}) is a bijection
between the set of all compositions of $n$ and the set $P_{n-1}$
of all subsets of $[n-1]$.
For a subset $I = \{i_1, \ldots, i_k\} \subseteq [n-1]$ with $i_1 < \ldots < i_k$
let $c_I$ be the product $s_{i_1} \cdots s_{i_k} \in S_n$.
This is a Coxeter element in the parabolic subgroup generated by $\{s_i : i\in I\}$,
and its cycle type is (the partition corresponding to) the composition $\mu$,
where $I = I(\mu)$.


For an $S_n$-representation $\rho$,
let $x^\rho$ be the vector with entries $\chi^\rho(c_I)$, where
the subsets $I \in P_{n-1}$ are ordered anti-lexicographically as
in Definition~\ref{def.Pn}.

For a set of combinatorial objects $\BBB$ with a descent map
$\Des: \BBB \to P_{n-1}$, let $v^\BBB$
be the vector with entries
\[
v^\BBB_J:=|\{b\in \BBB : \Des(b)=J\}| \qquad(\forall J \in P_{n-1}),
\]
ordered anti-lexicographically as in Definition~\ref{def.Pn}.

Recall the concept of fine set (Definition~\ref{d.fine}) and the
matrices $A_n$ (Definition~\ref{d.AB_recursion}).

\begin{observation}\label{t.fine_matrix}
In the above notation,
$\BBB$ is a fine set for $\rho$ if and only if
\[
x^\rho = A_{n-1}  v^\BBB,
\]
where
$x^\rho$ and $v^\BBB$ are written as column vectors.
\end{observation}

\begin{proof}
Follows from Definition~\ref{d.fine} together with
Lemma~\ref{t.AB_explicit}$(i)$.
\end{proof}

\medskip
\subsection{Distribution of descent sets}\label{subsection:descent_sets}\ 
\medskip

We are now ready to state our main application.

\begin{theorem}\label{t.main}
If $\BBB$ is a fine set for an $S_n$-representation $\rho$ (with
respect to a map $\Des$) then the character values of $\rho$
uniquely determine the distribution of descent sets over $\BBB$.
Specifically, for every $I\subseteq [n-1]$, the number of elements
in $\BBB$ whose descent set contains $I$ satisfies
\begin{eqnarray*}
|\{b\in \BBB : \Des(b) \supseteq I\}|
&=& \sum_J (AM_{n-1}^{-1})_{I,J} \chi^\rho(c_J) \\
&=& \frac{1}{|\langle I \rangle|}
\sum_{J:\,J\subseteq I} (-1)^{|J|}\chi^\rho(c_J)
\prod_{k=1}^{t} \prod_{i \in I_k \cap J} (\max(I_k) - i + 1),
\end{eqnarray*}
where $I_1, \ldots, I_t$ are the runs in $I$,
$|\langle I \rangle| = \prod_k (|I_k|+1)!$ is the cardinality of the parabolic subgroup of $S_n$
generated by $\{s_i : i\in I\}$,
and $c_I$ is any Coxeter element in this subgroup.
\end{theorem}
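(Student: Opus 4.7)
The plan is to recognize the fine set identity~(\ref{defn-fine1}) as the matrix equation $\chi = A_{n-1}\cdot f$, indexed by $P_{n-1}$, and then to invert and re-sum using the machinery developed in Sections~\ref{section:matrices}--\ref{section:matrix_entries}. The theorem will then fall out directly from the closed form for the entries of $AM_{n-1}^{-1}$ given in Corollary~\ref{t.AM_inverse}.

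The first step is to encode the descent-set distribution as a vector $f = (f_J)_{J\in P_{n-1}}$ with $f_J := |\{b\in\BBB : \Des(b) = J\}|$. Comparing Lemma~\ref{t.AB_explicit}(i) with Definition~\ref{def.unimodal_set}, and recalling that the runs of $I(\mu)$ correspond precisely to the components $\mu_k>1$, one sees that for $I = I(\mu)$ the row of $A_{n-1}$ indexed by $I$ has entry $(-1)^{|I\cap J|}$ in column $J$ exactly when $J$ is $\mu$-unimodal, and $0$ otherwise. The fine set identity~(\ref{defn-fine1}) is then precisely the $I(\mu)$-entry of $A_{n-1}\cdot f$. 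Writing $\chi_I := \chi^\rho(c_I)$ and noting that a Coxeter element of $\langle I(\mu)\rangle$ has cycle type $\mu$, this yields the compact equation $\chi = A_{n-1}\cdot f$.

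Since $A_{n-1}$ is invertible by Theorem~\ref{t.An-determinant}, one has $f = A_{n-1}^{-1}\chi$. The theorem asks for
\[
g_I := |\{b\in\BBB : \Des(b)\supseteq I\}| = \sum_{J\supseteq I} f_J = (Z_{n-1}\cdot f)_I.
\]
The key simplification is that $Z_{n-1}\cdot A_{n-1}^{-1} = AM_{n-1}^{-1}$, which is immediate from $AM_{n-1} = A_{n-1}M_{n-1}$ and $M_{n-1} = Z_{n-1}^{-1}$. Hence $g_I = \sum_J (AM_{n-1}^{-1})_{I,J}\,\chi^\rho(c_J)$, and substituting the explicit formula of Corollary~\ref{t.AM_inverse}, together with the observation $\prod_{k=1}^t (|I_k|+1)! = |\langle I\rangle|$ (since $\langle I\rangle\cong S_{|I_1|+1}\times\cdots\times S_{|I_t|+1}$), produces the claimed identity.

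The only conceptual content is the identification of $A_{n-1}$ as the matrix that encodes exactly the $\mu$-unimodality conditions appearing in the definition of a fine set; after this, everything else is formal manipulation of $A_n$, $AM_n$, $Z_n$ and $M_n$, with all the analytic work already concentrated in Corollary~\ref{t.AM_inverse}. There is therefore no real obstacle once the matrix reformulation is in place.
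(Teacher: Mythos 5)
Your proof is correct and follows essentially the same route as the paper: rewrite the fine-set identity as $\chi = A_{n-1}f$ using Lemma~\ref{t.AB_explicit}$(i)$, invoke invertibility (Theorem~\ref{t.An-determinant}), and pass to $Z_{n-1}f = AM_{n-1}^{-1}\chi$ so that Corollary~\ref{t.AM_inverse} gives the explicit formula. The paper's own proof does exactly this, writing the transformed equation as $Z_{n-1}v^{\BBB} = AM_{n-1}^{-1}x^{\rho}$ and noting $\prod_k(|I_k|+1)! = |\langle I\rangle|$ just as you do.
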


\begin{proof}
By Theorem~\ref{t.An-determinant}, $A_{n-1}$ is an invertible
matrix. Combining this with Observation~\ref{t.fine_matrix} proves
that $x^\rho$ uniquely determines $v^\BBB$.

Writing
the equality $x^\rho = A_{n-1}  v^\BBB$ in the equivalent form
\[
Z_{n-1} v^\BBB = AM_{n-1}^{-1} x^\rho,
\]
the explicit formula now follows from Corollary~\ref{t.AM_inverse}.
\end{proof}





Using the Inclusion-Exclusion Principle (namely, multiplying by $M_{n-1}$ on the left)
leads to another version of the explicit formula.

\begin{corollary}\label{t.main_cor}
Let $\BBB$ be a fine set for an $S_n$-representation $\rho$ (with respect to a map $\Des$).
Then, for every $D \subseteq [n-1]$, the number of elements in $\BBB$ with
descent set exactly $D$ satisfies
\[
|\{b\in \BBB : \Des(b) = D\}|
= \sum_J (A_{n-1}^{-1})_{D,J} \chi^\rho(c_J)
\]
\[
=
\sum_{J} (-1)^{|J|}\chi^\rho(c_J)
\sum_{I:\, I \supseteq D \cap J} \frac{(-1)^{|I \setminus D|}}{|\langle I \rangle|}
\prod_{k=1}^{t} \prod_{i \in I_k \cap J} (\max(I_k) - i + 1),
\]
with notation is as in Theorem~\ref{t.main}.
\end{corollary}

\section{Permutation statistics and character theory - an application}\label{section:applications2}


By Theorem~\ref{t.main}, certain statements concerning permutation statistics
have equivalent formulations in character theory. In particular
(using the language of $S_n$-modules instead of $S_n$-representations),

\begin{corollary}\label{t.ps-rt}
Given two $S_n$-modules with fine sets, the modules are isomorphic
if and only if their fine sets have the same descent set
distribution.
\end{corollary}



Here is a distinguished example.

\begin{defn}
Let $\BBB$ be a fine set with descent map $\Des$.
The {\em major index} of an element $b \in \BBB$ is
\[
\maj(b):=\sum_{i \in \Des(b)} i.
\]
\end{defn}
For a subset $I\subseteq [n-1]$ denote $\x^I:=\prod_{i\in I} x_i$,
where $x_i$ are indeterminates.
The following is a fundamental theorem on permutation statistics,
stated in its usual generating function form.


\begin{theorem}\label{t.FS-thm}
{\rm (Foata-Sch\"utzenberger)}~\cite[Theorem 1]{FS79}
\[
\sum_{\pi \in S_n} \x^{\Des(\pi)} q^{\ell(\pi^{-1})} =
\sum_{\pi \in S_n} \x^{\Des(\pi)} q^{\maj(\pi^{-1})}.
\]
\end{theorem}

Note that $\ell(\pi^{-1})$ is used for symmetry,
but actually $\ell(\pi^{-1}) = \ell(\pi)$.
See also~\cite{GG}.

\medskip

For $0\le k\le {n\choose 2}$ denote by $R_k$
the $k$-th homogeneous component of the coinvariant algebra of the
symmetric group $S_n$. The following is a classical theorem in
invariant theory.

\begin{theorem}\label{t.St-thm}
{\rm (Lusztig-Stanley)}~\cite[Prop.~4.11]{St79}
For a partition $\la$ of $n$ denote by $m_{k,\la}$ the number of
standard Young tableaux of shape $\la$ with major index $k$.
Then
\[
R_k \cong \bigoplus_{\la \vdash n} m_{k,\la} S^\la,
\]
where the sum is over all partitions of $n$ and $S^\la$
denotes the irreducible $S_n$-module indexed by $\la$.
\end{theorem}

Recall the concept of Knuth class (Definition~\ref{d.Knuth_class}).

\begin{claim}\label{t.fine_Knuth} 
Any Knuth class of shape $\la$ is a fine set for the 
irreducible module $S^\la$.
\end{claim}

\begin{proof}
By the Robinson-Schensted correspondence (see
Subsection~\ref{subsection:perms_SYT}), for each standard Young
tableau $T$ the permutations in the Knuth class $\CCC_T$ are in
bijection with the standard Young tableaux $Q$ of the same shape
$\la$ as $T$. By Fact~\ref{t.Des_RS}, this bijection preserves
descent set; and therefore, by
Proposition~\ref{t.fine-examples}$(i)$, $\CCC_T$ is a fine set for
the irreducible module $S^\la$.
\end{proof}


\begin{claim}
The Foata-Sch\"utzenberger Theorem is equivalent to the Lusztig-Stanley Theorem.
\end{claim}

%

\begin{proof}
By Proposition~\ref{t.fine-examples}$(ii)$, the set
of permutations 
$L_k=\{\pi \in S_n:\ \ell(\pi) = k\}$ is a fine set for $R_k$.

On the other hand, by Corollary~\ref{t.inverse_des_class}, the set of
permutations $\BBB_k := \{\pi \in S_n:\ \maj(\pi^{-1}) = k\}$
is a disjoint union of Knuth classes.
In fact, if $\pi \in S_n$ corresponds under the Robinson-Schensted correspondence
to a pair $(P_\pi, Q_\pi)$ of standard Young tableaux, then
$\maj(\pi^{-1}) = \maj(P_\pi)$.
Thus $B_k$ contains, for each $\la \vdash n$, exactly
$m_{k,\la}$ Knuth classes of shape $\la$
(in the notation of Theorem~\ref{t.St-thm}).
By Claim~\ref{t.fine_Knuth}, this
implies that $\BBB_k$ is a fine set for the $S_n$-module
$R'_k := \bigoplus_{\la \vdash n} m_{k,\la} S^\la$.

Therefore, by Corollary~\ref{t.ps-rt},
$R_k\cong R'_k$ (which is the claim of Theorem~\ref{t.St-thm}) holds
if and only if $L_k$ and $\BBB_k$ have the same descent set
distribution (which is the claim of Theorem~\ref{t.FS-thm}).

\end{proof}

\begin{remark} A combinatorial proof of the Lusztig-Stanley Theorem
as an application of the Foata-Sch\"utzenberger Theorem appears
in~\cite{Ro-Schubert}. The opposite implication seems to be new.
\end{remark}

\section{Fine sets in context}\label{section:fine_sets}


\medskip
\subsection{Equidistribution and quasi-symmetric functions}\label{subsection:quasi}\
\medskip

Another useful criterion is the following. For a partition
$\la \vdash n$ let $\SYT(\la)$ be the set of standard Young tableaux
of shape $\la$.

\begin{proposition}\label{t.criterion1}
Let $\BBB$ be a set of combinatorial objects, equipped with a map
$\Des: \BBB \to P_{n-1}$. Then $\BBB$ is a fine set if and only if
there exists a collection $(c_\la)_{\la \vdash n}$ 
of nonnegative integers such that
\[
\sum_{b \in \BBB} {\bf x}^{\Des(b)} =
\sum_{\la \vdash n} c_\la \sum_{T \in SYT(\la)} {\bf x}^{\Des(T)},
\]
where $\Des(T)$ on the RHS is as in Definition~\ref{d.Des_SYT}.
\end{proposition}

\begin{proof}
In the notation of Observation~\ref{t.fine_matrix},
$\BBB$ is a fine set for a representation $\rho$ if and only if
\[
x^\rho = A_{n-1} v^\BBB.
\]
On the other hand,
$x^\rho$ is a linear combination (with nonnegative integer coefficients) of
the vectors corresponding to irreducible representations.
Since $\SYT(\la)$ is a fine set for the irreducible module $S^\la$
(by Proposition~\ref{t.fine-examples}$(i)$),
the invertibility of $A_{n-1}$ implies that $\BBB$ is a fine set if and only if
\[
v^\BBB = \sum_{\la \vdash n} c_\la v^{\SYT(\la)}
\]
for suitable nonnegative integers $c_\la$.
By definition, this means that
\[
|\{b \in \BBB \,|\, \Des(b) = D\}| =
\sum_{\la \vdash n} c_\la \cdot |\{T \in \SYT(\la) \,|\, \Des(T) = D\}|
\qquad (\forall D \subseteq [n-1]),
\]
which is exactly the content of the stated polynomial equality.
\end{proof}






%

By~\cite[Theorem 2.1]{GR}, as reformulated in~\cite[Theorem 2.2]{R13},
conjugacy classes in the symmetric group satisfy 
this criterion. It follows that any subset of the symmetric group which is closed
under conjugation is a fine set.

\medskip

Another example satisfying this criterion has been given recently.
\begin{defn}\label{d.arc_perm}
A permutation $\pi\in S_n$ is called an {\em arc permutation} if,
for every $1\le k\le n$, the set $\{\pi(1),\dots,\pi(k)\}$ forms
an interval in the cyclic group $\ZZ_n$ (where $n$ is identified with 0).
\end{defn}
By~\cite[Theorem 5]{ER}, the set of arc permutations in $S_n$
satisfies the criterion of Proposition~\ref{t.criterion1} when $n \ge 4$.
For $n \le 3$ all permutations in $S_n$ are arc permutations
and thus, by Corollary~\ref{t.c_regular}, they form a fine set.
Therefore the set of arc permutations in $S_n$ is a fine set for all $n$.




\medskip

It is convenient to reformulate Proposition~\ref{t.criterion1}
in the language of quasi-symmetric functions.


Schur functions, indexed by partitions, form a distinguished basis for
the ring
of symmetric functions; 
see, e.g., \cite[Corollary 7.10.6]{Stanley_ECII}.
A symmetric function 
is {\em Schur positive} if all the coefficients in its expansion in
the basis of Schur functions are nonnegative. The problem of
determining whether a given symmetric function is Schur positive
is a major problems in contemporary algebraic combinatorics~\cite{Stanley_problems}.


Let $\BBB$ a set of combinatorial objects, equipped with a descent
map $\Des: \BBB \to P_{n-1}$.
For each subset $D \subseteq [n-1]$ define 
\[
F_D(\x) := \sum\limits_{i_1\le i_2 \le \ldots \le i_n \atop
{i_j < i_{j+1} \text{ if } j \in D}} x_{i_1} x_{i_2} \cdots x_{i_n},
\]
and let
\[
Q_{\BBB} := \sum\limits_{b\in \BBB} F_{\,\Des(b)}.
\]
Proposition~\ref{t.criterion1} may now be reformulated as follows.

\begin{proposition}\label{t.criterion_q}
The set $\BBB$ is a fine set if and only if the quasi-symmetric
function $Q_\BBB$ is symmetric and Schur positive.
\end{proposition}

\begin{proof}
The combinatorial definition of a Schur function $s_\la$
(see, e.g., \cite[Definition 7.10.1]{Stanley_ECII})
implies that
\[
s_\la = Q_{\SYT(\la)}.
\]
By definition,
\[
Q_\BBB
= \sum_{b \in \BBB} F_{\,\Des(b)}
= \sum_{D \subseteq [n-1]} |\{b \in \BBB \,|\, \Des(b) = D\}| \cdot F_D
\]
and similarly
\[
s_\la = Q_{\SYT(\la)}
= \sum_{T \in \SYT(\la)} F_{\,\Des(T)}
= \sum_{D \subseteq [n-1]} |\{T \in \SYT(\la) \,|\, \Des(T) = D\}| \cdot F_D.
\]
As in the proof of Proposition~\ref{t.criterion1}, $\BBB$ is a fine set if and only if
there exist nonnegative integers $(c_\la)_{\la \vdash n}$ such that
\[
|\{b \in \BBB \,|\, \Des(b) = D\}| =
\sum_{\la \vdash n} c_\la \cdot |\{T \in \SYT(\la) \,|\, \Des(T) = D\}|
\quad (\forall D \subseteq [n-1]).
\]
This equality implies that
\[
Q_\BBB = \sum_{\la \vdash n} c_\la s_\la
\]
and, since $\{F_D \,|\, D \subseteq [n-1]\}$ are linearly independent,
the converse also holds.
Thus $\BBB$ is a fine set if and only if $Q_\BBB$ is symmetric and Schur positive.
\end{proof}

The challenging problem of characterizing the fine subsets of the symmetric group
may now be rephrased.

\begin{question} For which $A\subseteq S_n$ is $F_A$ symmetric
and Schur positive?
\end{question}

We conclude with a list of the known fine subsets of the symmetric group.
The first two examples appeared in~\cite[Theorem 5.5]{GR}.
Recall the relevant definitions from
Subsection~\ref{subsection:perms_SYT}. 

%

\begin{proposition}
The following subsets of $S_n$ are fine sets:
\begin{itemize}
\item[$(i)$]
Knuth classes and their unions (including
inverse descent classes and the set of $321$-avoiding permutations).
\item[$(ii)$]
Conjugacy classes and their unions.
\item[$(iii)$]
The set of permutations of fixed Coxeter length.
\item[$(iv)$]
The set of arc permutations.
\end{itemize}
\end{proposition}

\begin{proof}\
\begin{itemize}
\item[$(i)$]
By Claim~\ref{t.fine_Knuth}, any Knuth class is a fine set.
By Corollaries~\ref{t.inverse_des_class} and~\ref{t.321_avoiding},
inverse descent classes and the set of all $321$-avoiding permutations
are disjoint unions of Knuth classes.
\item[$(ii)$]
By the remarks following Proposition~\ref{t.criterion1}.
\item[$(iii)$]
By Proposition~\ref{t.fine-examples}$(ii)$.
\item[$(iv)$]
By the remarks following Definition~\ref{d.arc_perm}.
\end{itemize}
\end{proof}

\medskip
\subsection{Distinguished bases}\label{subsection:canonical}\
\medskip

In their seminal paper~\cite{KL}, Kazhdan and Lusztig constructed
complex representations of a Coxeter group $W$ by partitioning the
group into cells and introducing a distinguished basis,
indexed by the group elements.  The group algebra, and more
generally the Iwahori-Hecke algebra, is then decomposed into
a direct sum of subspaces, each spanned by the (normalized) basis elements
indexed by the elements of one cell.
For the symmetric group, the (left) cells are the Knuth classes and
the resulting representations are exactly the irreducible ones.


\begin{theorem}\label{t.KL1}(Kazhdan-Lusztig Theorem)\ \\
Let $\CCC$ be a Knuth class of shape $\lambda$ in $S_n$. Then
\begin{itemize}
\item[$(i)$]
$\CCC$ is a left cell in $S_n$.
\item[$(ii)$]
There exist coefficients $a_i(b,b')$, defined for
$1 \le i \le n-1$ and $b, b' \in \CCC$, such that
the following action of the simple reflections $s_i$
determines an $S_n$-representation:
\[
s_i (C_b) =
\begin{cases}
-C_b, &\text{\rm if } i \in \Des (b); \\
C_b + \sum_{b' \in \BBB \text{ \rm s.t.\ } i \in\Des(b')}
a_i(b,b') C_{b'}, &\text{\rm otherwise.}
\end{cases}
\]
\item[$(iii)$]
The resulting module is irreducible and isomorphic to $S^\la$.
\end{itemize}
\end{theorem}

For the first part see~\cite[Fact 8]{GM} and~\cite[\S 6.4]{BB}.
The second part is a special case of~\cite[(2.3.a)-(2.3.d)]{KL},
see also~\cite[\S 7.4]{Humphreys} and~\cite[(6.4)]{BB}. For the
last part see~\cite[Theorem 1.4]{KL}.

\medskip

A very similar phenomenon occurs in the study of the homogeneous
components of the coinvariant algebra of classical Weyl groups.
Schubert polynomials, indexed by elements of fixed Coxeter length
$k$, form a basis for the $k$-th homogeneous component. A formula
similar to the one in Theorem~\ref{t.KL1}$(ii)$ was proved
in~\cite[Lemma 3.1]{Ro-Schubert} for the symmetric group and
in~\cite{APR11} for classical Weyl groups; 
see also~\cite[Theorem 3.14($iii$)]{BGG}.

\medskip


This terminology provides another useful characterization of fine sets.

\begin{theorem}\label{t.condition-fine}
Let $\BBB$ be a set of combinatorial objects, equipped with a map
$\Des : \BBB \to P_{n-1}$. Then $\BBB$ is a fine set for an
$S_n$-representation $\rho$ if and only if there exists a basis
$\{C_b : b\in \BBB\}$ for the corresponding representation space
such that for every $1 \le i \le n-1$ and $b, b' \in \BBB$
\begin{equation}\label{eq3.1}
\rho(s_i) (C_b) =
\begin{cases}
-C_b, &\text{\rm if } i \in \Des (b); \\
C_b + \sum_{b' \in \BBB \text{ \rm s.t.\ } i \in \Des(b')}
a_i(b,b') C_{b'}, &\text{\rm otherwise,}
\end{cases}
\end{equation}
for suitable coefficients $a_i(b,b')$.
\end{theorem}

\begin{proof}
First we prove that the existence of a basis with coefficients satisfying (\ref{eq3.1})
implies that $\BBB$ is fine.
The proof is a natural extension of the proofs of~\cite[Theorem 2]{Ro2}
and~\cite[Theorem 1]{Ro-Schubert}.
Let $\langle \cdot, \cdot \rangle$ be the inner product on $V:={\rm
span}\{C_b:\ b\in \BBB\}$ defined by
\[
\langle C_{b},C_{b'}\rangle =\delta_{b,b'} :=
\begin{cases}
1, & \text{\rm if } b = b'; \\
0, & \text{\rm otherwise.}
\end{cases}
\]

Let $S_n$ be the symmetric group on the letters $1,\ldots,n$. For
$1 \le i \le n-1$ denote $s_i := (i,i+1)$, a simple reflection
(adjacent transposition) in $S_n$. For a composition $\mu =
(\mu_1, \dots, \mu_t)$ of $n$ define $\sigma_\mu \in S_n$ by
\[
\sigma_\mu^{-1} := (1, 2, \ldots, \mu_1)(\mu_1+1, \mu_1+2, \ldots,
\mu_1+\mu_2) \cdots,
\]
a product of $t$ cycles of lengths $\mu_1, \mu_2, \ldots, \mu_t$
consisting of consecutive letters. The permutation
$\sigma_\mu^{-1}$ may be obtained from the product $s_1 s_2 \cdots
s_{n-1}$ of all simple reflections (in the usual order) by
deleting the factors $s_{\mu_1+\ldots+\mu_k}$ for all $1 \le k <
t$, namely
\[
\sigma_\mu^{-1} = \prod_{i \in I(\mu)} s_i,
\]
and $\sigma_\mu$ is the product of the same factors in the reverse
order.


By definition,
\[
\chi^\rho(\sigma_\mu) = \sum\limits_{b\in \BBB} \langle
\rho(\sigma_\mu)(C_b), C_b \rangle.
\]
From now on we omit $\rho$, for ease of notation.

The permutation $\sigma_\mu$ has cycle type $\mu$. Thus, in order
to prove that $\BBB$ is a fine set, it suffices to prove that for
every composition $\mu$ of $n$ and every $b\in \BBB$
\begin{equation}\label{eq.key}
\langle \sigma_\mu(C_b), C_b \rangle =
\begin{cases}
(-1)^{|\Des(b)\cap I(\mu)|}, &\text{\rm if $b$ is $\mu$-unimodal};\\
0, &\text{\rm otherwise.}
\end{cases}
\end{equation}


Assume first that $b \in \BBB$ is not $\mu$-unimodal. Then there
exists an index $i$ such that $i, i+1 \in I(\mu)$,
$i \not\in \Des(b)$ and $i+1 \in \Des(b)$. By (\ref{eq3.1}),
\[
i \not\in \Des(b) \then \langle s_{i}(C_{b'}), C_{b}\rangle =
\delta_{b', b} \qquad (\forall b, b' \in \BBB).
\]
It follows that, by linearity,
\begin{equation}\label{eq3.3}
i \not\in \Des(b) \then \langle s_{i}(v), C_b \rangle = \langle v,
C_b \rangle \qquad (\forall b \in \BBB,\, v \in V)
\end{equation}
and, in particular,
\[
\langle s_{i} \sigma_\mu(C_b), C_b\rangle = \langle
\sigma_\mu(C_b), C_b\rangle.
\]
On the other hand, by (\ref{eq3.1}),
\[
i+1 \in \Des(b) \then \langle \sigma_\mu s_{i+1}(C_b), C_b \rangle
= -\langle \sigma_\mu(C_b), C_b\rangle.
\]
The braid relation $s_i s_{i+1} s_i = s_{i+1} s_i s_{i+1}$ and the
commuting relations  $s_j s_k = s_k s_j$ for $|j - k|>1$ imply
that
\[
s_{i} \sigma_\mu = \sigma_\mu s_{i+1}.
\]
The last three equalities combine to give
\[
\langle \sigma_\mu(C_b), C_b\rangle = \langle s_{i}
\sigma_\mu(C_b), C_b\rangle = \langle \sigma_\mu s_{i+1}(C_b), C_b
\rangle = -\langle \sigma_\mu(C_b), C_b\rangle,
\]
or equivalently $\langle  \sigma_\mu(C_b), C_b \rangle = 0$,
as claimed in~(\ref{eq.key}).

\medskip

It remains to compute $\langle \sigma_\mu(C_b), C_b \rangle$ when
$b$ is $\mu$-unimodal.
The $\mu$-unimodality of $b$ and the commuting relations $s_j s_k
= s_k s_j$ for $|j - k|>1$ make it possible to push all the
factors in $\sigma_\mu$ with indices in $\Des(b)$ to the right, so
that $\sigma_\mu$ may be written in the form
\[
\sigma_\mu =  s_{i_{n-t}} \cdots s_{i_{m+1}} s_{i_m} \cdots
s_{i_1},
\]
where $t$ is the number of parts in $\mu$ (so that $n-t =
|I(\mu)|$), $m:=|\Des(b)\cap I(\mu)|$, and $i_j \in \Des(b)$ if
and only if $1 \le j \le m$. Then (\ref{eq3.3}) implies that
\[
\langle \sigma_\mu(C_b), C_b \rangle = \langle s_{i_m}\cdots
s_{i_1}(C_b), C_b \rangle
\]
and (\ref{eq3.1}) finally implies that
\[
\langle \sigma_\mu(C_b), C_b \rangle = (-1)^m,
\]
as claimed in~(\ref{eq.key}), completing the proof of this
direction.


\medskip

For the opposite direction, assume that $\BBB$ is fine.
By Proposition~\ref{t.criterion1} there exists a collection
$(c_\la)_{\la \vdash n}$ of nonnegative integers such that
\[
\sum_{b \in \BBB} {\bf x}^{\Des(b)} =
\sum_{\la \vdash n} c_\la \sum_{T \in SYT(\la)} {\bf x}^{\Des(T)}.
\]
We can therefore partition $\BBB$ into disjoint subsets, each corresponding to
a certain $\SYT(\la)$ via a $\Des$-preserving bijection.
By Claim~\ref{t.fine_Knuth}, each $\SYT(\la)$ can be replaced by
a corresponding Knuth class of permutations which, by Theorem~\ref{t.KL1},
carries a linear action of the required form~(\ref{eq3.1}).


\end{proof}




Since $S_n$ embeds naturally into
classical Weyl groups of rank $n$, it follows that Kazhdan-Lusztig
cells, as well as subsets of elements of fixed Coxeter length in
these groups, are fine sets for the $S_n$-action on the group algebra.

\end{document}